\newif\ifdetails
\newcommand{\DETAIL}[1]%
{\ifdetails\par\fbox{\begin{minipage}{0.9\linewidth}\textit{Detail:}
      #1\end{minipage}}\par\fi}
\newcommand{\TODO}[1]%
{\ifdetails\par\fbox{\begin{minipage}{0.9\linewidth}\textbf{TODO:}
      #1\end{minipage}}\par\fi}
\newtheorem{lemma}{Lemma}
\newtheorem{theorem}[lemma]{Theorem}
\newtheorem{corollary}[lemma]{Corollary}
\theoremstyle{remark}
\newtheorem{remark}{Remark}
\newtheorem{question}{Question}
\newcommand*{\KeepStyleUnderBrace}[1]{%
  \mathop{%
    \mathchoice
    {\underbrace{\displaystyle#1}}%
    {\underbrace{\textstyle#1}}%
    {\underbrace{\scriptstyle#1}}%
    {\underbrace{\scriptscriptstyle#1}}%
  }\limits
}
\newcommand{\old}[1]{{}}
\title{The Minimum asymptotic density of binary caterpillars}
\author{Audace A. V. Dossou-Olory}
\thanks{The author was supported by Stellenbosch University and African Institute for Mathematical Science (AIMS) South Africa.}
\address{Audace A. V. Dossou-Olory\\ Department of Mathematical Sciences \\ Stellenbosch University \\ Private Bag X1, Matieland 7602 \\ South Africa}
\email{audaced@sun.ac.za}
\subjclass[2010]{Primary 05C05; secondary 05C07, 05C30, 05C35}
\keywords{caterpillars, minimum asymptotic density, leaf-induced subtrees, $d$-ary trees, inducibility, complete $d$-ary trees, strictly $d$-ary trees.}
\begin{document}

\begin{abstract}
Given $d\geq 2$ and two rooted $d$-ary trees $D$ and $T$ such that $D$ has $k$ leaves, the density $\gamma(D,T)$ of $D$ in $T$ is the proportion of all $k$-element subsets of leaves of $T$ that induce a tree isomorphic to $D$, after erasing all vertices of outdegree $1$. In a recent work, it was proved that the limit inferior of this density as the size of $T$ grows to infinity is always zero unless $D$ is the $k$-leaf binary caterpillar $F^2_k$ (the binary tree with the property that a path remains upon removal of all the $k$ leaves). Our main theorem in this paper is an exact formula (involving both $d$ and $k$) for the limit inferior of $\gamma(F^2_k,T)$ as the size of $T$ tends to infinity.
\end{abstract}

\maketitle

\section{Preliminaries and statement of the main results}

Throughout the whole note, $d$ will always denote a fixed positive integer greater than $1$. A rooted tree is called a \textit{$d$-ary tree} if each of its non-leaf vertices has outdegree at most $d$ but at least $2$. In the case $d=2$, we shall simply speak of \textit{binary} trees, and in the case $d=3$, we shall speak of \textit{ternary} trees.

In a recent paper~\cite{AudaceStephanPaper1}, Czabarka, Sz\'{e}kely, Wagner and the author of the current note investigated the \textit{inducibility} of $d$-ary trees, which can be thought of as the \textit{maximum asymptotic density} of a $d$-ary tree occurring as a subtree induced by leaves of another $d$-ary tree with sufficiently large number of leaves (a formal definition will be given later in this section).

For two $d$-ary trees $D$ and $T$ such that $D$ has $k$ leaves, we shall denote by $\gamma(D,T)$ the proportion of all $k$-element subsets of leaves of $T$ that induce a tree isomorphic (in the sense of rooted trees) to $D$, after erasing all vertices that have outdegree $1$. The tree induced by a subset $L$ of leaves of the leaf-set of a $d$-ary tree $T$ is obtained by first taking the minimal subtree containing all the leaves in $L$, and then erasing vertices that have outdegree $1$. Every tree obtained in this manner will be referred to as a \textit{leaf-induced subtree} of $T$; see Figure~\ref{leaf-induced} for an illustration.

\begin{figure}[!h]\centering  
\begin{tikzpicture}[thick]
\node [circle,draw] (r) at (0,0) {};

\draw (r) -- (-2,-2);
\draw (r) -- (0,-4);
\draw (r) -- (2,-2);
\draw (-2,-2) -- (-2.5,-4);
\draw (-2,-2) -- (-1.5,-4);
\draw (2,-2) -- (1,-4);
\draw (2,-2) -- (2,-4);
\draw (2,-2) -- (3,-4);
\draw (1.25,-3.5) -- (1.5,-4);

\node [fill,circle, inner sep = 2pt ] at (-2.5,-4) {};
\node [fill,circle, inner sep = 2pt ] at (-1.5,-4) {};
\node [fill,circle, inner sep = 2pt ] at (0,-4) {};
\node [fill,circle, inner sep = 2pt ] at (1,-4) {};
\node [fill,circle, inner sep = 2pt ] at (1.5,-4) {};
\node [fill,circle, inner sep = 2pt ] at (2,-4) {};
\node [fill,circle, inner sep = 2pt ] at (3,-4) {};

\node at (-1.5,-4.5) {$\ell_1$};
\node at (0,-4.5) {$\ell_2$};
\node at (1,-4.5) {$\ell_3$};
\node at (2,-4.5) {$\ell_4$};

\node [circle,draw] (r1) at (6,-2) {};

\draw (r1) -- (5,-4);
\draw (r1) -- (6,-4);
\draw (r1) -- (7,-4);
\draw (6.75,-3.5)--(6.5,-4);

\node [fill,circle, inner sep = 2pt ] at (5,-4) {};
\node [fill,circle, inner sep = 2pt ] at (6,-4) {};
\node [fill,circle, inner sep = 2pt ] at (6.5,-4) {};
\node [fill,circle, inner sep = 2pt ] at (7,-4) {};

\node at (5,-4.5) {$\ell_1$};
\node at (6,-4.5) {$\ell_2$};
\node at (6.5,-4.5) {$\ell_3$};
\node at (7,-4.5) {$\ell_4$};

\end{tikzpicture}
\caption{A ternary tree and the subtree induced by the four leaves $\ell_1,\ell_2,\ell_3,\ell_4$.}\label{leaf-induced}
\end{figure}
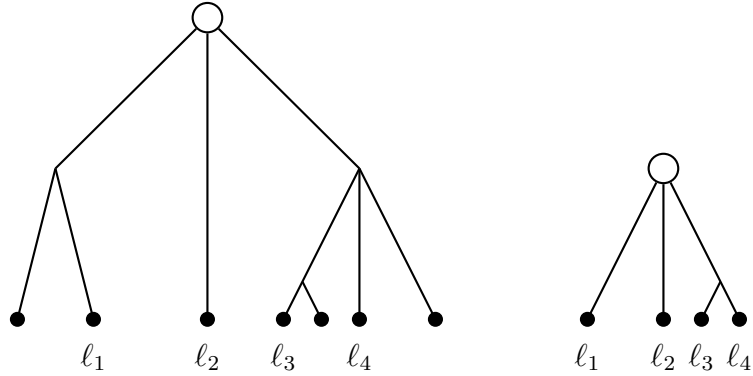

\medskip
By a \textit{copy} of $D$ in $T$, we mean any leaf-induced subtree of $T$ which is isomorphic to $D$. We shall denote by $c(D,T)$ the total number of copies of $D$ in $T$ and by $|T|$ the number of leaves of $T$. So $\gamma(D,T)$ is the ratio
\begin{align*}
\frac{c(D,T)}{\dbinom{|T|}{|D|}}
\end{align*}
by definition. For brevity, $\gamma(D,T)$ will be called the \textit{density} of $D$ in $T$.

\medskip
In~\cite{AudaceStephanPaper1}, the inducibility of a $d$-ary tree is defined as being the maximum asymptotic density of $D$. Formally speaking, the inducibility $I_d(D)$ of a $d$-ary tree $D$ is the limit superior of the density of $D$ in $T$ as the number of leaves of $T$ grows to infinity. One of the principal results in \cite{AudaceStephanPaper1} is that
\begin{align}\label{IndDA}
I_d(D)=\limsup_{\substack{|T| \to \infty\\ T~\text{$d$-ary tree}}} \gamma(D,T)=\lim_{n\to \infty} \max_{\substack{|T|=n \\ T~\text{$d$-ary tree}}} \gamma(D,T)\,.
\end{align}

For our purposes, let us define and call the quantity
\begin{align*}
\liminf_{\substack{|T| \to \infty\\ T~\text{$d$-ary tree}}} \gamma(D,T)
\end{align*}
the \textit{minimum asymptotic density} of the $d$-ary tree $D$ in $d$-ary trees. To put it another way, we mean
\begin{align*}
\liminf_{\substack{|T| \to \infty\\ T~\text{$d$-ary tree}}} \gamma(D,T)=\lim_{n\to \infty} \min_{\substack{|T|=n \\ T~\text{$d$-ary tree}}} \gamma(D,T)\,,
\end{align*}
where the proof of existence of the limit is analogous to that in~\eqref{IndDA}.

\medskip
An important and recurring theme that appears throughout extremal graph theory is finding the minimum or maximum value of a given graph invariant within a class of graphs all sharing a certain property. Understanding an invariant provides information about the structure of a graph. In particular, the problem of characterising the extremal graphs has been and continues to be a topic of a great interest to graph theorists.

It is therefore natural to consider the problem of determining the minimum asymptotic density of a $d$-ary tree $D$ in $d$-ary trees. Quite fascinatingly however, it turns out that the study of this problem reduces to the study of the minimum asymptotic density of so-called binary caterpillars. 

\medskip
A \textit{binary caterpillar} is a binary tree with the property that its non-leaf vertices form a path starting at the root. We shall denote by $F^2_k$ the binary caterpillar with $k$ leaves -- see Figure~\ref{binCatF4} for the binary caterpillar with four leaves.

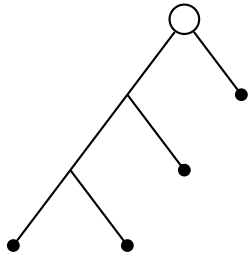
\begin{figure}[htbp]\centering
\begin{tikzpicture}[thick,level distance=10mm]
\tikzstyle{level 1}=[sibling distance=15mm]
\node [circle,draw]{}
child {child {child {[fill] circle (2pt)}child {[fill] circle (2pt)}}child {[fill] circle (2pt)}}
child {[fill] circle (2pt)};
\end{tikzpicture}
\caption{The $4$-leaf binary caterpillar $F^2_4$.}\label{binCatF4}
\end{figure}

The following fundamental result from~\cite{AudaceStephanPaper1} characterises all the $d$-ary trees with the maximal inducibility:

\begin{theorem}[\cite{AudaceStephanPaper1}]\label{max1dAry}
Let $d\geq 2$ be an arbitrary but fixed positive integer. Among $d$-ary trees, only binary caterpillars have inducibility $1$.
\end{theorem}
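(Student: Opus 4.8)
\medskip
\noindent\textbf{Proof strategy.} The statement has two halves: that every binary caterpillar satisfies $I_d(F^2_k)=1$, and that every $d$-ary tree which is \emph{not} a binary caterpillar has inducibility strictly below $1$. The first half is purely structural; the second I would reduce to the positivity of the \emph{minimum} asymptotic density of the caterpillar itself, which is precisely the quantity the main theorem of this paper evaluates.

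\emph{Binary caterpillars attain inducibility $1$.} It suffices to exhibit one family of $d$-ary trees in which $F^2_k$ has density $1$, and the caterpillars $F^2_n$ themselves work. The crux is that \emph{every leaf-induced subtree of a binary caterpillar is again a binary caterpillar}: the defining property ``the non-leaf vertices form a path starting at the root'' survives both operations used to form a leaf-induced subtree. Passing to the minimal subtree spanned by a leaf-set $L$ keeps only internal vertices that occur as lowest common ancestors of pairs from $L$, and in a caterpillar these all lie on the original spine, hence form a sub-path of it; suppressing vertices of outdegree $1$ turns a sub-path into a sub-path and does not change the outdegree of any surviving vertex, so the tree stays binary. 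Since for each $j$ the $j$-leaf binary caterpillar is unique (the spine length is forced by the leaf count), every $k$-element set of leaves of $F^2_n$ induces $F^2_k$; hence $c(F^2_k,F^2_n)=\binom{n}{k}$ and $\gamma(F^2_k,F^2_n)=1$ for all $n\ge k$. As densities never exceed $1$, this forces $I_d(F^2_k)=1$.

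\emph{Nothing else does.} Let $D$ be a $d$-ary tree with $k:=|D|$ leaves that is not a binary caterpillar; since the $k$-leaf binary caterpillar is unique, $D\not\cong F^2_k$. A $k$-element set of leaves of a $d$-ary tree $T$ induces exactly one isomorphism type, so for every $d$-ary tree $T$,
\begin{equation*}
\gamma(D,T)+\gamma(F^2_k,T)\le 1 .
\end{equation*}
Passing to the limit superior over $|T|\to\infty$ gives $I_d(D)\le 1-\liminf_{|T|\to\infty}\gamma(F^2_k,T)$, so the assertion follows as soon as one knows
\begin{equation*}
\beta_k:=\liminf_{\substack{|T|\to\infty\\ T\ \text{$d$-ary}}}\gamma(F^2_k,T)>0 ,
\end{equation*}
which I regard as the real content behind Theorem~\ref{max1dAry}, and whose exact value (necessarily positive) is the subject of this paper's main theorem.

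\emph{Why $\beta_k>0$.} I would argue by induction on $k$, with an inner induction on $|T|$, using the decomposition of a copy of $F^2_k$ with respect to the children-subtrees $T_1,\dots,T_p$ of the root of $T$ (write $n_i=|T_i|$, $n=\sum_i n_i$). Because $F^2_k$ is binary and its two root-subtrees are a single leaf and a copy of $F^2_{k-1}$, a copy of $F^2_k$ in $T$ either lies inside one $T_i$ or has its $k$ leaves split as $1+(k-1)$ between two distinct subtrees; for $k\ge 3$ this yields
\begin{equation*}
c(F^2_k,T)=\sum_{i=1}^{p}c(F^2_k,T_i)+\sum_{i=1}^{p}(n-n_i)\,c(F^2_{k-1},T_i),
\end{equation*}
with $c(F^2_2,T)=\binom{n}{2}$ as base case. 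Feeding in inductive lower bounds $c(F^2_k,T_i)\ge\alpha_k n_i^{k}$ (with a lower-order correction to absorb the finitely many small trees), the inductive step reduces -- using $n^k-\sum_i n_i^k=\sum_i n_i(n^{k-1}-n_i^{k-1})$ -- to a symmetric inequality $\sum_{i=1}^{p}H(n_i)\ge 0$, where $H(x)=(\alpha_k-\alpha_{k-1})x^k+\alpha_{k-1}n x^{k-1}-\alpha_k n^{k-1}x$. The choice $\alpha_k=\frac{d-1}{d^{k-1}-1}\,\alpha_{k-1}$ makes $x=n/d$ a root of $H$ (besides $x=0$ and $x=n$) and gives the sign pattern $H\le 0$ on $[0,n/d]$, $H\ge 0$ on $[n/d,n]$; from this, together with the constraints $\sum_i n_i=n$ and $p\le d$, the inequality $\sum_i H(n_i)\ge 0$ follows, with equality at the balanced profile $n_i\equiv n/d$ -- the complete $d$-ary tree. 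Establishing this symmetric-function inequality uniformly in the subtree sizes is the step I expect to demand the most care; the rest is bookkeeping, and the resulting $\beta_k$ (a positive multiple of $\prod_{j=3}^{k}\frac{d-1}{d^{j-1}-1}$) is far from sharp, but positivity is all that Theorem~\ref{max1dAry} requires.
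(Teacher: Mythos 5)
This theorem is quoted by the paper from~\cite{AudaceStephanPaper1} and is not proved here, so there is no internal proof to compare against; I therefore assess your argument on its own terms and against the machinery this paper does develop. Your first half is correct: every leaf-induced subtree of a binary caterpillar is again a binary caterpillar, so $\gamma(F^2_k,F^2_n)=1$ for all $n\ge k$ and hence $I_d(F^2_k)=1$. Your reduction of the second half is also sound: from $\gamma(D,T)+\gamma(F^2_k,T)\le 1$ for $D\not\cong F^2_k$ with $|D|=k$ one gets $I_d(D)\le 1-\liminf_{|T|\to\infty}\gamma(F^2_k,T)$, so everything hinges on the positivity of the minimum asymptotic density of $F^2_k$. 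That is exactly the content of Theorem~\ref{minVslimInf} of this paper (whose proof does not rely on Theorem~\ref{max1dAry}, so your inversion of the logical order is legitimate), and for $d=2$ it was already established in~\cite{czabarka2016inducibility}.

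The genuine gap is in your positivity sketch, at precisely the point where this paper invests its two lemmas. Your reduction of the inductive step to $\sum_{i=1}^{p}H(n_i)\ge 0$ with $H(x)=(\alpha_k-\alpha_{k-1})x^k+\alpha_{k-1}n x^{k-1}-\alpha_k n^{k-1}x$ is a correct restatement of inequality~\eqref{Interm}, i.e.\ of Lemma~\ref{LemmaForMinDensityBiCater}, and your observation that with $\alpha_k=\frac{d-1}{d^{k-1}-1}\alpha_{k-1}$ the positive roots of $H$ are $n/d$ and $n$, with $H\le 0$ on $[0,n/d]$ and $H\ge 0$ on $[n/d,n]$, is also correct. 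But $\sum_i H(n_i)\ge 0$ does \emph{not} follow from this sign pattern together with $\sum_i n_i=n$ and $p\le d$: a function with the same sign pattern but a deep negative well on $(0,n/d)$ and a shallow positive bump on $(n/d,n)$ violates the conclusion (one part just above $n/d$ or near $n$, the rest near the bottom of the well), so the specific algebraic structure of $H$ must be used. The paper proves the equivalent statement via Muirhead's inequality --- the vector $(k-1,1,0,\ldots,0)$ majorises every exponent vector in $V_{d,k}$ --- and you would need that argument or a genuine substitute (e.g.\ a Schur-convexity argument) to close the step. A second, smaller omission in the same induction: the hypothesis cannot be the clean bound $c(F^2_k,T_i)\ge\alpha_k n_i^k$ (false when $n_i<k$); once you carry an explicit correction term, as the paper does with $-n^{k-1}/(k-1)!$, you must show the corrections recombine through the recursion, which is what Lemma~\ref{lemUseful} (the supremum equals $1/k$) is for, with a separate computation needed for $k=3$. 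With those ingredients supplied --- or by simply invoking Theorem~\ref{minVslimInf} as a black box --- your derivation of Theorem~\ref{max1dAry} is complete.
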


It follows immediately from Theorem~\ref{max1dAry} that
\begin{align*}
\liminf_{\substack{|T|\to \infty \\T~\text{$d$-ary tree}}} \gamma(D,T) =0
\end{align*}
for every $d$, as soon as $D$ is not a binary caterpillar because
\begin{align*}
0\leq \liminf_{\substack{|T|\to \infty \\T~\text{$d$-ary tree}}} \gamma(D,T) \leq \liminf_{\substack{|T|\to \infty \\T~\text{$d$-ary tree}}} \big(1-\gamma(F^2_{|D|},T)\big)=1-I_d\big(F^2_{|D|}\big)\,,
\end{align*}
provided that $D$ is not isomorphic to $F^2_{|D|}$. However, at this point, it is not clear a priori that
\begin{align*}
\liminf_{\substack{|T|\to \infty \\T~\text{$d$-ary tree}}} \gamma(F^2_k,T)  >0
\end{align*}
for every $k$---one will have to put more effort in finding out what the minimum asymptotic density of binary caterpillars might be for every $d$. Thus, the problem we address in this note can be formulated as follows:

\medskip
\textbf{Problem}: \textit{Given a binary caterpillar $F^2_k$, is it true that every $d$-ary tree $T$ with sufficiently large number of leaves always contains a positive density of $F^2_k$? If so, what is the asymptotic minimum number of copies of $F^2_k$ in a $d$-ary tree with large enough number of leaves?}

\medskip
Let us mention that binary caterpillars have been proved to be extremal among binary trees with respect to some other graph parameters. For instance, binary caterpillars have been shown in~\cite{fischermann2002wiener} to have the maximum Wiener index (sum of distances between all unordered pairs of vertices) among all binary trees with a prescribed number of leaves. In~\cite{szekely2005subtrees}, Sz{\'e}kely and Wang proved that binary caterpillars minimise the number of subtrees among all binary trees with a given number of leaves.

\medskip
In the following, we shall prove that the minimum asymptotic density of an arbitrary binary caterpillar is strictly positive for every $k$. In fact, we shall even be able to derive the precise value of this limiting quantity for every $k$. Clearly,
\begin{align*}
\liminf_{\substack{|T|\to \infty \\T~\text{$d$-ary tree}}} \gamma(F^2_k,T) =1
\end{align*}
for $k\leq 2$. Let us now proceed to find its value as a function of $d$ and $k$ for $k\geq 3$.

\medskip
A $d$-ary tree will be called a \textit{strictly $d$-ary tree} if each of its vertices has outdegree $0$ or $d$. Our main result reads as follows:

\begin{theorem}\label{minVslimInf}
Let $d,k\geq 2$ be arbitrary but fixed positive integers. Then the following double identity
\begin{align*}
\liminf_{\substack{|T|\to \infty \\T~\text{$d$-ary tree}}} \gamma(F^2_k,T) =\liminf_{\substack{|T|\to \infty \\ T~\text{strictly $d$-ary tree}}}  \gamma(F^2_k,T)=\frac{k!}{2} \cdot (d-1)^{k-1} \cdot \prod_{j=1}^{k-1}(d^j-1)^{-1}
\end{align*}
holds. Furthermore, we have
\begin{align*}
\min_{\substack{|T|=n\\T~\text{$d$-ary tree}}} \gamma\big(F^2_k,T \big) \leq \liminf_{\substack{|T|\to \infty \\T~\text{$d$-ary tree}}} \gamma(F^2_k,T)
\end{align*}
for every $k$ and $n\geq k$. 
\end{theorem}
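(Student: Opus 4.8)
The plan is to split off an easy structural core — a leaf‑deletion averaging identity — that yields the last displayed inequality and reduces everything to computing $\lim_n m^{(k)}_n$, where $m^{(k)}_n:=\min_{|T|=n,\ T\ d\text{-ary}}\gamma(F^2_k,T)$; then to get the upper bound on this limit from complete $d$‑ary trees; and finally to get the matching lower bound by a double induction (on $k$, and within it on the number of leaves) organised around a normalised copy‑count functional. For the first part: given a $d$‑ary tree $T$ with $n>k$ leaves and a leaf $v$, let $T-v$ be the $d$‑ary tree on $n-1$ leaves obtained by deleting $v$ and suppressing the out‑degree‑$1$ vertex this may create. A $k$‑subset of leaves of $T$ induces $F^2_k$ in $T$ precisely when it avoids $v$ and induces $F^2_k$ in $T-v$, so $c(F^2_k,T)=\tfrac1{n-k}\sum_v c(F^2_k,T-v)$, and dividing by $\binom{n}{k}$ (using $\binom{n-1}{k}=\tfrac{n-k}{n}\binom{n}{k}$) gives
\[
\gamma(F^2_k,T)=\frac1n\sum_{v\ \text{leaf of}\ T}\gamma(F^2_k,T-v).
\]
Hence $\gamma(F^2_k,T)\ge m^{(k)}_{n-1}$, so $(m^{(k)}_n)_{n\ge k}$ is non‑decreasing; being bounded by $1$ it converges, $\liminf_{|T|\to\infty}\gamma(F^2_k,T)=\lim_n m^{(k)}_n$, and in particular $m^{(k)}_n\le\liminf_{|T|\to\infty}\gamma(F^2_k,T)$ for every $n\ge k$, which is the last display of the theorem. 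As strictly $d$‑ary trees form a subclass, $\liminf_{\text{strictly}\ d\text{-ary}}\gamma(F^2_k,T)\ge\liminf_{d\text{-ary}}\gamma(F^2_k,T)$, so it remains only to show that both quantities equal $\mu_d(k):=\tfrac{k!}{2}(d-1)^{k-1}\prod_{j=1}^{k-1}(d^j-1)^{-1}$.

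For the recursion and the upper bound: a copy of $F^2_k$ in $T$ is the same datum as a vertex $w\ne\text{root}(T)$, a copy of $F^2_{k-1}$ inside the subtree $T_w$ rooted at $w$, and one of the $|T_{p(w)}|-|T_w|$ leaves lying below the parent $p(w)$ of $w$ but not below $w$ — the resulting copy has least common ancestor $p(w)$. Thus, for $k\ge3$,
\[
c(F^2_k,T)=\sum_{w\ne\text{root}(T)}\big(|T_{p(w)}|-|T_w|\big)\,c(F^2_{k-1},T_w),
\]
with base case $\gamma(F^2_2,T)\equiv1=\mu_d(2)$. For the complete $d$‑ary tree $T_h$ (height $h$, $n=d^h$) this gives $c(F^2_k,T_h)=(d-1)d^h\sum_{s=0}^{h-1}c(F^2_{k-1},T_s)$; an induction on $k$ using $c(F^2_{k-1},T_s)\sim A_{k-1}d^{(k-1)s}$ yields $c(F^2_k,T_h)\sim A_kd^{kh}$ with $A_k=(d-1)A_{k-1}/(d^{k-1}-1)$ and $A_2=\tfrac12$, so $\gamma(F^2_k,T_h)=c(F^2_k,T_h)/\binom{d^h}{k}\to k!A_k$, and telescoping $A_k=\tfrac{d-1}{d^{k-1}-1}A_{k-1}$ shows $k!A_k=\mu_d(k)$. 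Since the $T_h$ are strictly $d$‑ary, this gives $\liminf_{d\text{-ary}}\gamma(F^2_k,T)\le\mu_d(k)$ and $\liminf_{\text{strictly}\ d\text{-ary}}\gamma(F^2_k,T)\le\mu_d(k)$, and it remains to prove the reverse inequality for $d$‑ary trees.

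The lower bound I would prove by induction on $k$, the case $k=2$ being trivial. Write $\lambda_k:=\tfrac{d-1}{d^{k-1}-1}$ and, for a $d$‑ary tree $T$ with $n$ leaves, $\Psi(T):=\sum_{w\ne\text{root}(T)}\big(q_{p(w)}-q_w\big)q_w^{\,k-1}$ with $q_w:=|T_w|/n$; peeling off the root, $\Psi(T)=\sum_i p_i^{k-1}(1-p_i)+\sum_i p_i^k\Psi(T_i)$, where $T_1,\dots,T_m$ ($2\le m\le d$) are the root‑subtrees and $p_i:=|T_i|/n$. By the induction hypothesis, for each $\eta>0$ there is $N(\eta)$ with $\gamma(F^2_{k-1},S)\ge\mu_d(k-1)-\eta$ whenever $|S|\ge N(\eta)$, and with $\binom{N}{k-1}\ge\tfrac{N^{k-1}}{(k-1)!}(1-\eta)$ there; substituting these bounds (and $c(F^2_{k-1},S)\ge0$ for small $S$) into the recursion above, and using that the vertices $w$ with $|T_w|<N(\eta)$ contribute only $O_\eta(n^2)=o\big(\binom{n}{k}\big)$, one obtains, as $n=|T|\to\infty$,
\[
\gamma(F^2_k,T)\ge k\,\big(\mu_d(k-1)-\eta\big)(1-\eta)\,\Psi(T)-o_\eta(1).
\]
Since $k\mu_d(k-1)\lambda_k=\mu_d(k)$, the lower bound reduces to the combinatorial estimate $\Psi(T)\ge\lambda_k\big(1-n^{1-k}\big)$ for every $d$‑ary $T$ with $n$ leaves, which I would prove by induction on $n$ (base $n=1$: $\Psi=0$): in the step, the recursion for $\Psi$ together with the identity $\sum_i p_i^k|T_i|^{1-k}=n^{1-k}$ reduces the claim to the inequality $\sum_{i=1}^m h(p_i)\ge\lambda_k$, where $h(x):=x^{k-1}-(1-\lambda_k)x^k$, valid for every probability vector $(p_1,\dots,p_m)$ with $2\le m\le d$ positive coordinates. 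Granting this, letting $\eta\to0$ gives $\liminf_{d\text{-ary}}\gamma(F^2_k,T)\ge\mu_d(k)$, hence equality for both classes.

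The main obstacle is the inequality $\sum_{i=1}^m h(p_i)\ge\lambda_k$. It is sharp but is attained in two genuinely different ways — at the balanced vector $p_i=1/d$ with $m=d$, and in the limit of highly concentrated vectors such as $(1-\delta,\delta,0,\dots,0)$ — which rules out a one‑line convexity argument; the constraint $m\le d$ is essential, and the proof relies on $\lambda_k\le1/k$ for $k\ge3$ (equivalently $d^{k-1}\ge k(d-1)+1$), which makes $h$ unimodal on $[0,1]$ with $h(0)=0$, $h(1)=\lambda_k$. I expect to handle it by a Lagrange‑multiplier/majorization analysis: a minimiser has coordinates taking at most two distinct values, which collapses the problem to a one‑parameter inequality checkable directly; equivalently one shows that $\tfrac{\sum_i p_i^{k-1}(1-p_i)}{1-\sum_i p_i^k}$ is minimised at the maximally spread‑out vector. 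The remaining care is the uniformity in the passage to $\Psi$: one must check that the errors from the binomial‑versus‑power estimate, from the $\eta$ in the induction hypothesis at $k-1$, and from the small subtrees are all $o(1)$ as $|T|\to\infty$ before $\eta\to0$ — and it is exactly for this that the bound on $\Psi$ is kept in the effective form $\lambda_k(1-n^{1-k})$ rather than merely $\lambda_k-o(1)$, so that it survives being fed back into the induction on $k$. Combining the three steps then gives all assertions of the theorem.
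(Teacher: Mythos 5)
Your overall architecture is sound and, except for one point, would prove the theorem. Parts of it coincide with the paper's proof: your leaf-deletion averaging identity giving monotonicity of $\min_{|T|=n}\gamma(F^2_k,T)$ is essentially the paper's Claim in the second half of the proof, and your complete-tree computation is consistent with the exact formula of Theorem~\ref{CompProport}. Your lower bound, however, is routed differently: you use the vertex-based recursion $c(F^2_k,T)=\sum_{w\neq \mathrm{root}}\big(|T_{p(w)}|-|T_w|\big)\,c(F^2_{k-1},T_w)$ and the normalised functional $\Psi$, which yields the bound directly for \emph{all} $d$-ary trees, so you never need the strictly-$d$-ary approximation result that the paper imports from~\cite{AudaceStephanPaper1}; the paper instead runs an induction on $n$ for strictly $d$-ary trees with the explicit error term $n^{k-1}/(k-1)!$. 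That self-containedness is a genuine plus of your route, and your reductions (the recursion for $\Psi$, the identity $\sum_i p_i^k|T_i|^{1-k}=n^{1-k}$, the relation $k\,\mu_d(k-1)\,\lambda_k=\mu_d(k)$, and the $O_\eta(n^2)$ handling of small subtrees) all check out.

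The genuine gap is the inequality you yourself flag as the main obstacle: $\sum_{i=1}^m h(p_i)\ge\lambda_k$ with $h(x)=x^{k-1}-(1-\lambda_k)x^k$. This is not a side issue; it is the analytic heart of the whole lower bound, and you only sketch a Lagrange-multiplier/majorization plan for it. Note that, since $\sum_{1\le i<j\le d}\big(x_ix_j^{k-1}+x_jx_i^{k-1}\big)=\sum_j x_j^{k-1}(1-x_j)$ when $\sum_i x_i=1$, your inequality is exactly equivalent to Lemma~\ref{LemmaForMinDensityBiCater} of the paper, i.e.\ $F_{d,k}(x_1,\dots,x_d)\ge(d-1)/(d^{k-1}-1)$ (your case $m<d$ follows by padding with zeros, legitimate because $h(0)=0$ and the denominator only vanishes when a single coordinate equals $1$). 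Your sketched plan still requires real work: ruling out or handling boundary minimisers, proving that interior minimisers take at most two distinct coordinate values, and then verifying a family of one-parameter inequalities indexed by the multiplicities --- delicate precisely because, as you observe, equality occurs both at the balanced vector and in the degenerate two-coordinate limit. The paper settles this step in a few lines: expand $1-\sum_i x_i^k$ by the Multinomial Theorem over compositions of $k$ with no part equal to $k$, note that $(k-1,1,0,\dots,0)$ majorises every such composition, and apply Muirhead's inequality. If you substitute that lemma for your unproven claim, the rest of your argument, including the effective bound $\Psi(T)\ge\lambda_k\big(1-n^{1-k}\big)$ and the $\eta$-uniformity bookkeeping, goes through and gives all assertions of the theorem.
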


Formally, the first equality in Theorem~\ref{minVslimInf} tells us that the minimum asymptotic density of any binary caterpillar can also be computed by restricting the set of $d$-ary trees over which the minimum is taken to strictly $d$-ary trees only. This situation, in a certain sense, parallels the opposite problem concerning the maximum asymptotic density $I_d(D)$ of a $d$-ary tree $D$, where the authors of paper~\cite{AudaceStephanPaper1} could prove that $I_d(D)$ satisfies the equivalent identity
\begin{align*}
I_d(D)=\lim_{n\to \infty} \max_{\substack{|T|=n \\ T~\text{strictly $d$-ary tree}}} \gamma(D,T)
\end{align*}
for every $d$-ary tree $D$. So, it may also be immediately clear that the identity
\begin{align*}
\liminf_{\substack{|T|\to \infty \\T~\text{$d$-ary tree}}} \gamma(F^2_k,T) =\liminf_{\substack{|T|\to \infty \\ T~\text{strictly $d$-ary tree}}}  \gamma(F^2_k,T)
\end{align*}
holds for every $k$. Indeed, it is shown in~\cite{AudaceStephanPaper1} that for every $d$-ary tree $T$ with sufficiently large number of leaves, there exists a strictly $d$-ary tree $T^*$ such that $|T^*|\geq |T|$ and the asymptotic formula
\begin{align*}
\gamma\big(D,T\big)=\gamma\big(D,T^*\big) +\mathcal{O}\big(|T|^{-1} \big)\,.
\end{align*}
holds for every $d$-ary tree $D$, where the $\mathcal{O}$-constant depends on $d$ only (and nothing else!). 

\begin{remark}
The special cases $d=2$ and $d=3$ of Theorem~\ref{minVslimInf} correspond to
\begin{align*}
\liminf_{\substack{|T| \to \infty \\ T~\text{binary tree}}}\gamma\big(F^2_3,T \big)=1~~\text{and}~~
\liminf_{\substack{|T| \to \infty \\ T~\text{ternary tree}}}\gamma\big(F^2_3,T \big)=\frac{3}{4}\,,
\end{align*}
respectively. In particular, it displays the following equivalence in ternary trees:
\begin{align*}
\liminf_{\substack{|T|\to \infty \\ T~\text{ternary tree}}}\gamma\big(F^2_3,T \big)= 1-I_3(C_3)
\end{align*}
as the star $C_3$ (consisting of a root and three leaves attached to it) and the binary caterpillar $F^2_3$ are the only $3$-leaf $d$-ary trees for every $d> 2$. Moreover, this confirms that the inducibility of $C_3$ in ternary trees is $1/4$---see~\cite[Theorem~1]{AudaceStephanPaper1}. 
\end{remark}

The next corollary will follow from the proof of Theorem~\ref{minVslimInf}:

\begin{corollary}\label{cor:Theo}
Let $d,k\geq 2$ be arbitrary but fixed positive integers. Then the minimum number of copies of the binary caterpillar $F^2_k$ in an arbitrary $n$-leaf $d$-ary tree $T$ is asymptotically 
\begin{align*}
\frac{ n^{k}}{2} \cdot (d-1)^{k-1} \cdot \prod_{j=1}^{k-1}(d^j-1)^{-1} + \mathcal{O} (n^{k-1})
\end{align*}
as $n \to \infty$.
\end{corollary}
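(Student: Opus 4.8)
The plan is to prove Theorem~\ref{minVslimInf} by combining a lower bound that holds for \emph{all} $d$-ary trees with a matching construction that realizes the bound asymptotically. First I would set up the counting machinery: for a $d$-ary tree $T$ with root $r$ and branches $T_1,\dots,T_m$ (where $2\le m\le d$), I would derive a recursion for $c(F^2_k,T)$. A copy of $F^2_k$ in $T$ either lies entirely within one branch $T_i$, or it uses leaves from at least two branches; in the latter case, because $F^2_k$ is a caterpillar, the ``spine'' of the copy passes through $r$, so exactly one branch contributes the ``long'' sub-caterpillar $F^2_{k-1}$ (or recursively a shorter one at deeper levels) while the remaining leaves (a single leaf, or a cherry at the very bottom) come from other branches. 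Writing $f_k(T) = c(F^2_k,T)$ and $n_i = |T_i|$, this yields something like
\begin{align*}
f_k(T) = \sum_{i=1}^m f_k(T_i) + \sum_{i=1}^m \Big(f_{k-1}(T_i)\cdot \sum_{j\ne i} n_j\Big) + (\text{boundary term for the bottom cherry})\,.
\end{align*}
The normalization by $\binom{|T|}{k}$ converts this into a recursion for densities that is amenable to an inductive analysis.

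Next I would establish the \textbf{lower bound} $\gamma(F^2_k,T) \ge \frac{k!}{2}(d-1)^{k-1}\prod_{j=1}^{k-1}(d^j-1)^{-1} - o(1)$ as $|T|\to\infty$, uniformly over $d$-ary trees. Using the reduction quoted from \cite{AudaceStephanPaper1} (replacing $T$ by a strictly $d$-ary $T^*$ with $|T^*|\ge|T|$ at the cost of an $\mathcal{O}(|T|^{-1})$ error), it suffices to prove the bound for strictly $d$-ary trees, where every internal node has exactly $d$ children. The natural approach is induction on $k$ together with a convexity/smoothing argument on the branch sizes: one shows that, for fixed $|T|$, the density $\gamma(F^2_k,T)$ is asymptotically minimized when the tree is ``as balanced as possible'' at every level, i.e.\ the complete strictly $d$-ary tree. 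Concretely, I expect to need a lemma saying that if $n_1,\dots,n_d$ sum to $n$ and each subtree already satisfies its inductive density estimate, then the combined density is at least the target value; here the product $\prod_{j=1}^{k-1}(d^j-1)^{-1}$ should emerge precisely from iterating the recursion on the complete $d$-ary tree, where at depth $j$ one has $d^j$ leaves below each node and the relevant combinatorial factor is $d^j-1$ (the number of leaves outside a fixed child subtree, relative to a subtree of size $d^{j-1}$, gives the ratio $(d^j - d^{j-1})/d^{j-1}\cdot\frac{1}{\text{something}}$, ultimately $\frac{d-1}{d^j-1}$ after normalization). The main obstacle, I anticipate, is making this smoothing rigorous: showing that unbalanced branch sizes cannot do \emph{better} (produce fewer copies) than the balanced choice, since the recursion mixes $f_k$ and $f_{k-1}$ terms with different homogeneity degrees, and one must track the lower-order terms carefully enough to see that they do not affect the limit. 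This is the heart of the matter.

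Then I would prove the \textbf{upper bound} by exhibiting an explicit sequence of $d$-ary trees achieving the limit: the complete strictly $d$-ary trees $B_h$ of height $h$ (with $d^h$ leaves), letting $h\to\infty$. For these trees the recursion becomes a clean one-variable recursion, and a direct computation (or an easy induction on $h$ for fixed $k$, then extracting the $h\to\infty$ limit) should give
\begin{align*}
\lim_{h\to\infty}\gamma(F^2_k,B_h) = \frac{k!}{2}(d-1)^{k-1}\prod_{j=1}^{k-1}(d^j-1)^{-1}\,.
\end{align*}
Since these are strictly $d$-ary trees, this simultaneously shows the right-hand side is an upper bound for $\liminf$ over strictly $d$-ary trees, hence (the liminf over the larger class being $\le$ the liminf over the smaller, and the lower bound going the other way) all three quantities coincide. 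The final ``furthermore'' assertion, that $\min_{|T|=n}\gamma(F^2_k,T)$ is at most the limiting value for \emph{every} $n\ge k$ (not just asymptotically), should follow from a monotonicity observation: blowing up a tree (replacing leaves by copies of a fixed subtree, or the ``tensor'' construction used to prove existence of the limit) does not increase the density of $F^2_k$, so the sequence of minima is non-increasing, and its limit is therefore a lower bound for each term. Finally, Corollary~\ref{cor:Theo} is immediate from the theorem by multiplying the density by $\binom{n}{k} = \frac{n^k}{k!} + \mathcal{O}(n^{k-1})$, which cancels the $k!$ and leaves $\frac{n^k}{2}(d-1)^{k-1}\prod_{j=1}^{k-1}(d^j-1)^{-1} + \mathcal{O}(n^{k-1})$.
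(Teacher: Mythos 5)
Your overall architecture (a lower bound for all trees via the branch recursion, a matching computation on complete $d$-ary trees, then passage to Corollary~\ref{cor:Theo} by multiplying by $\binom{n}{k}$) matches the paper's, but the two steps you leave as sketches are exactly where the content lies, and one of them is stated backwards. For the lower bound you reduce to strictly $d$-ary trees and then propose to show that the density is asymptotically minimized by the balanced (complete) tree, acknowledging that making this smoothing rigorous is ``the heart of the matter'' --- but you do not supply that argument, and in fact the paper never proves that complete trees are minimizers (the exact-minimizer question is posed as an open question at the end). Instead it proves, by induction on $n$, the explicit inequality $c(F^2_k,T)\ge b_k n^k-\frac{1}{(k-1)!}n^{k-1}$ with $b_k=\frac{1}{2}(d-1)^{k-1}\prod_{j=1}^{k-1}(d^j-1)^{-1}$, valid for every strictly $d$-ary $T$; the induction step hinges on Lemma~\ref{LemmaForMinDensityBiCater} (via Muirhead's inequality), which gives $b_k\le b_{k-1}\cdot\frac{\sum_{i\ne j}\alpha_i\alpha_j^{k-1}}{1-\sum_i\alpha_i^k}$ for arbitrary branch proportions $\alpha_i$, and on Lemma~\ref{lemUseful}, the bound $\frac{\sum_{i\ne j}\alpha_i\alpha_j^{k-2}}{1-\sum_i\alpha_i^{k-1}}\le\frac{1}{k-1}$, which keeps the error term at $-\frac{1}{(k-1)!}n^{k-1}$ through the induction. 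This quantitative second-order control is precisely what Corollary~\ref{cor:Theo} requires: your closing sentence derives the corollary ``by multiplying the density by $\binom{n}{k}$,'' but your lower bound is only claimed with a $-o(1)$ error at the density level, which yields $b_kn^k-o(n^k)$, not the stated $\mathcal{O}(n^{k-1})$. (The extension from strictly $d$-ary to all $d$-ary trees then uses the quoted $\mathcal{O}(|T|^{-1})$ transfer, which does preserve an $\mathcal{O}(n^{k-1})$ error.)

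Second, the ``furthermore'' assertion, which is also what gives the upper half of the corollary at every $n$ (not only $n=d^h$): you claim the minima form a non-increasing sequence whose limit is a lower bound for each term. That is the wrong direction for the claim $\min_{|T|=n}\gamma(F^2_k,T)\le\liminf$, and it is false as stated: for $d=3$, $k=3$, $n=3$ the ternary star has density $0<3/4$. The paper proves the opposite monotonicity --- the minima are nondecreasing --- by picking a leaf lying in at least the average number $k\,c(F^2_k,T)/|T|$ of copies and deleting it, which shows $\min_{|T|=n-1}\gamma\le\min_{|T|=n}\gamma$; each term is then at most the limit, whence $\min_{|T|=n}c(F^2_k,T)\le k!\,b_k\binom{n}{k}\le b_kn^k$, completing the corollary for all $n$.
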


We should mention that the fact that the binary caterpillar has positive minimum asymptotic density was actually the key result in~\cite{czabarka2016inducibility} for the application to the tanglegram crossing problem (see the proof of Lemma 11 in~\cite{czabarka2016inducibility}).

\section{Proof of the main theorem and its corollary}

This section carries a proof of Theorem~\ref{minVslimInf} as well as a proof of Corollary~\ref{cor:Theo}. But before we get to the proofs of these results, we need to go through some preparation.

\medskip
Rooted trees are predestined for recursive approaches. For a $d$-ary tree $D$ with branches $D_1,D_2,\ldots,D_r$, we define the equivalence relation $\sim_{D}$ on the set of all permutations of the indices $1,2,\ldots,r$ as follows: for two permutations $\pi$ and $\pi^{\prime}$ of $\{1,2,\ldots,r\}$,
\begin{align*}
\big(\pi(1),\pi(2),\ldots,\pi(r)\big) \sim_{D} \big(\pi^{\prime}(1),\pi^{\prime}(2),\ldots,\pi^{\prime}(r)\big)
\end{align*}
if for every $j \in \{1,2,\ldots,r\}$, the tree $D_{\pi(j)}$ is isomorphic (in the sense of rooted trees) to the tree $D_{\pi^{\prime}(j)}$. 

Further, we denote by $M(D)$ a complete set of representatives of all equivalence classes of the equivalence relation $\sim_{D}$. Thus, if $m_1,m_2,\ldots,m_c$ denote the multiplicities of the branches of $D$ with respect to isomorphism, then the size of $M(D)$ is exactly
\begin{align*}
|M(D)|= \binom{r}{m_1,m_2,\ldots,m_c}\,.
\end{align*}

Bearing this notation in mind, we get the following recursion
\begin{align}\label{The general recursion}
c(D,T)=\sum_{i=1}^d c(D,T_i)+\sum_{\substack{\{i_1,i_2,\ldots,i_r\}\subseteq \{1,2,\ldots,d\}}}~~\sum_{\pi \in M(D)}~\prod_{j=1}^r c\big(D_{\pi(j)},T_{i_j}\big)\,,
\end{align}
which is valid for every strictly $d$-ary tree $T$ with branches $T_1,T_2,\ldots,T_d$. The proof of this formula is straightforward. In words, this formula is established as follows:
\begin{itemize}
\item The term $\sum_{i=1}^d c(D,T_i)$ corresponds to the sum, over all branches of $T$, of the total number of subsets of leaves in a single branch of $T$ that induce a copy of $D$.
\item The expression $\prod_{j=1}^r c\big(D_{\pi(j)},T_{i_j}\big)$ stands for the number of copies of $D$ in which its branches $D_{\pi(1)},D_{\pi(2)},\ldots, D_{\pi(r)}$ are induced by subsets of leaves of $T_{i_1},T_{i_2}, \ldots,T_{i_r}$, respectively. We run this product for every subset of $r$ elements of the set of branches of $T$ and for every permutation $\pi$ in $M(D)$, as to take into consideration the possibility that some branches of $D$ might be isomorphic.
\end{itemize}

\medskip
The \textit{complete $d$-ary tree of height $h$} is the strictly $d$-ary tree in which all the leaves reside at the same distance $h$ from the root. We shall denote it by $CD^d_h$. Note that $CD^d_h$ has $d^h$ leaves in total.

The complete $4$-ary tree of height $2$ is shown in Figure~\ref{CD4.2}.

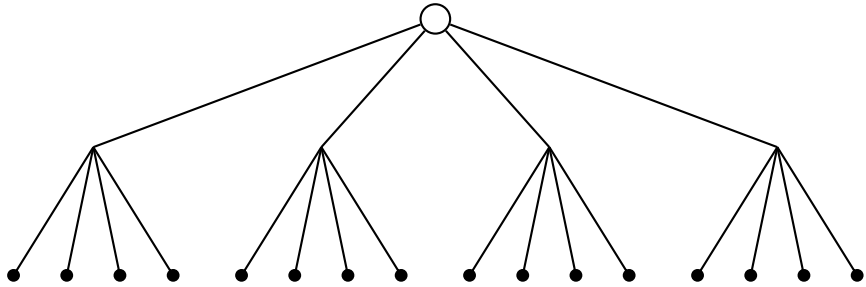
\begin{figure}[htbp]\centering
\begin{tikzpicture}[thick,level distance=17mm]
\tikzstyle{level 1}=[sibling distance=30mm]
\tikzstyle{level 2}=[sibling distance=7mm]
\node [circle,draw]{}
child {child {[fill] circle (2pt)}child {[fill] circle (2pt)}child {[fill] circle (2pt)}child {[fill] circle (2pt)}}
child {child {[fill] circle (2pt)}child {[fill] circle (2pt)}child {[fill] circle (2pt)}child {[fill] circle (2pt)}}
child {child {[fill] circle (2pt)}child {[fill] circle (2pt)}child {[fill] circle (2pt)}child {[fill] circle (2pt)}}
child {child {[fill] circle (2pt)}child {[fill] circle (2pt)}child {[fill] circle (2pt)}child {[fill] circle (2pt)}};
\end{tikzpicture}
\caption{The complete $4$-ary tree of height $2$.}\label{CD4.2}
\end{figure}

\medskip
The following formula can be found explicitly in the proof of Theorem~1 of paper~\cite{AudaceStephanPaper1}: for every fixed positive integer $d\geq 2$, we have
\begin{align}\label{StarForm}
c(CD^r_1,CD^d_h)=\frac{\dbinom{d}{r}}{d^r-d} \cdot \big(d^{r\cdot h} -d^h\big)
\end{align}
for every $r>1$ and all $h\geq 1$ (in \cite{AudaceStephanPaper1}, the tree $CD^r_1$ is called the $r$-leaf star).

\medskip
A \textit{$d$-ary caterpillar} is a strictly $d$-ary tree with the property that every non-leaf vertex has $d-1$
adjacent vertices that are leaves, except for the lowest which has $d$ adjacent vertices that are leaves. Note that the non-leaf vertices must lie on a single path. We shall denote the $d$-ary caterpillar with $k$ leaves by $F^d_k$ -- see Figure~\ref{Ternary caterpillars.} for ternary caterpillars.

\begin{figure}[htbp]\centering
  \begin{subfigure}[b]{0.2\textwidth} \centering  
\begin{tikzpicture}[thick,level distance=10mm]
\tikzstyle{level 1}=[sibling distance=10mm]
\node [circle,draw]{}
child {[fill] circle (2pt)}
child {[fill] circle (2pt)}
child {[fill] circle (2pt)};
\end{tikzpicture}
   \caption{$F^3_3$}
  \end{subfigure}\qquad
\begin{subfigure}[b]{0.2\textwidth} \centering 
  \begin{tikzpicture}[thick,level distance=10mm]
\tikzstyle{level 1}=[sibling distance=10mm]
\tikzstyle{level 2}=[sibling distance=10mm]
\node [circle,draw]{}
child {[fill] circle (2pt)}
child {child {[fill] circle (2pt)}child {[fill] circle (2pt)}child {[fill] circle (2pt)}}
child {[fill] circle (2pt)};
\end{tikzpicture}
   \caption{$F^3_5$}
  \end{subfigure} \qquad
\begin{subfigure}[b]{0.2\textwidth} \centering 
  \begin{tikzpicture}[thick,level distance=9mm]
\tikzstyle{level 1}=[sibling distance=10mm]
\tikzstyle{level 2}=[sibling distance=10mm]
\node [circle,draw]{}
child {[fill] circle (2pt)}
child {child {[fill] circle (2pt)}child {child {[fill] circle (2pt)}child {[fill] circle (2pt)}child {[fill] circle (2pt)}}child {[fill] circle (2pt)}}
child {[fill] circle (2pt)};
\end{tikzpicture}
   \caption{$F^3_7$}
  \end{subfigure} 
	\caption{Ternary caterpillars $F^3_k$.} \label{Ternary caterpillars.}
\end{figure}
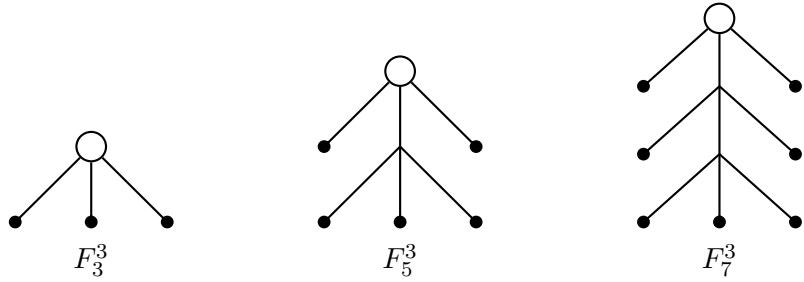

\medskip
In the following theorem, we derive an exact formula for the number of copies of the $r$-ary caterpillar with $k$ leaves in a complete $d$-ary tree of arbitrary height:

\begin{theorem}\label{CompProport}
Let an arbitrary positive integer $d\geq 2$ be fixed. For every $r \in \{2,3,\ldots,d\}$, the number of copies of $F^r_k$ in $CD^d_h$ is
\begin{align*}
c\big(F^r_k,CD^d_h\big)=\binom{d}{r}^{\frac{k-1}{r-1}}\cdot \Big(\frac{r}{d} \Big)^{\frac{k-r}{r-1}} \cdot d^{h-1} \cdot \prod_{i=1}^{\frac{k-1}{r-1}} \Bigg(\frac{d^{h\cdot (r-1)}-d^{(i-1)\cdot (r-1)}}{d^{i\cdot (r-1)}-1} \Bigg)
\end{align*}
for every $k>1$ and all $h\geq 1$. In particular, we have
\begin{align*}
\lim_{h\to \infty} \gamma(F^r_k,CD^d_h)=\frac{k!}{d} \cdot \dbinom{d}{r}^{\frac{k-1}{r-1}} \cdot  \Big(\frac{r}{d} \Big)^{\frac{k-r}{r-1}} \cdot \prod_{j=1}^{\frac{k-1}{r-1}}\big(d^{(r-1)j}-1 \big)^{-1}\,.
\end{align*}
\end{theorem}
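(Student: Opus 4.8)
The plan is to establish the closed form for $c(F^r_k,CD^d_h)$ by induction on $k$ through the values $k\in\{r,\,2r-1,\,3r-2,\ldots\}$ for which $F^r_k$ is defined, using the general recursion~\eqref{The general recursion} for the induction step and the star formula~\eqref{StarForm} for the base case; the limiting density then follows by a direct asymptotic comparison with $\binom{d^h}{k}$.

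First I would record the branch structure of $F^r_k$: for $k>r$ its root has outdegree $r$, the branches being one copy of $F^r_{k-r+1}$ together with $r-1$ single leaves, so that $|M(F^r_k)|=r$. Feeding this into~\eqref{The general recursion} with $T=CD^d_h$ (whose $d$ branches are each $CD^d_{h-1}$), and using that the number of copies of a single leaf in $CD^d_{h-1}$ equals $d^{h-1}$, one gets
\begin{align*}
c\big(F^r_k,CD^d_h\big)=d\cdot c\big(F^r_k,CD^d_{h-1}\big)+r\binom{d}{r}\,d^{(h-1)(r-1)}\,c\big(F^r_{k-r+1},CD^d_{h-1}\big)\,.
\end{align*}
Iterating this down to $CD^d_0$, where the count is $0$ since $k\geq 2$, collapses the first term and leaves the single sum
\begin{align*}
c\big(F^r_k,CD^d_h\big)=r\binom{d}{r}\,d^{h-1}\sum_{s=0}^{h-1}d^{s(r-2)}\,c\big(F^r_{k-r+1},CD^d_s\big)\,.
\end{align*}
The base case $k=r$ is immediate, since $F^r_r=CD^r_1$ and the asserted formula with $\tfrac{k-1}{r-1}=1$ coincides with~\eqref{StarForm} after a one-line simplification.

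For the induction step I would substitute the inductively known formula for $c(F^r_{k-r+1},CD^d_s)$ into the displayed sum, pull out every factor independent of $s$, and reduce the claim to a single polynomial identity. Writing $q=d^{r-1}$, $p=\tfrac{k-1}{r-1}$, and $P_p(x)=\prod_{j=0}^{p-1}(x-q^j)$, what remains to be shown is
\begin{align*}
(q^p-1)\sum_{s=0}^{h-1}q^{s}\prod_{j=0}^{p-2}\big(q^{s}-q^{j}\big)=\prod_{j=0}^{p-1}\big(q^{h}-q^{j}\big)=P_p\big(q^{h}\big)\,.
\end{align*}
This is the heart of the argument, and it telescopes: factoring $q^{s+1}-q^{j}=q(q^{s}-q^{j-1})$ for $j\geq 1$ (and handling $j=0$ and $j=p-1$ separately) yields $P_p(q^{s+1})-P_p(q^{s})=(q^p-1)\,q^{s}\prod_{j=0}^{p-2}(q^{s}-q^{j})$, so the sum collapses to $P_p(q^{h})-P_p(1)$, and $P_p(1)=0$ because of the factor $1-q^{0}$. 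I expect the only genuine obstacle to be bookkeeping: carrying the powers of $d$ and $r$ and the product $\prod_i(d^{i(r-1)}-1)$ accurately through the substitution so that exactly this telescoping identity remains. One could also bypass the induction by summing over the depths of the $p$ branching vertices of a copy directly, which rewrites $c(F^r_k,CD^d_h)$ as $r^{p-1}\binom{d}{r}^{p}d^{h-p}\sum_{h-1\geq g_0>\cdots>g_{p-1}\geq 0}q^{g_0+\cdots+g_{p-1}}$ and identifies the sum with a Gaussian binomial coefficient, but the recursive route reuses machinery already in place.

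Finally, for the limit I would divide the closed form by $\binom{d^h}{k}$ and let $h\to\infty$. Each factor $d^{h(r-1)}-d^{(i-1)(r-1)}$ is asymptotic to $d^{h(r-1)}$, and since $(r-1)\cdot\tfrac{k-1}{r-1}=k-1$ the numerator is asymptotic to $\binom{d}{r}^{(k-1)/(r-1)}(r/d)^{(k-r)/(r-1)}d^{hk-1}\prod_{j=1}^{(k-1)/(r-1)}(d^{(r-1)j}-1)^{-1}$, whereas $\binom{d^h}{k}\sim d^{hk}/k!$; the ratio is precisely the stated limiting density.
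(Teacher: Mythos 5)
Your proposal is correct, and its skeleton coincides with the paper's: the same specialisation of the recursion~\eqref{The general recursion} to $T=CD^d_h$, $D=F^r_k$ (giving $c(F^r_k,CD^d_h)=d\,c(F^r_k,CD^d_{h-1})+r\binom{d}{r}d^{(h-1)(r-1)}c(F^r_{k-r+1},CD^d_{h-1})$), the same base case via~\eqref{StarForm}, and the same asymptotic comparison with $\binom{d^h}{k}\sim d^{hk}/k!$ for the limit. Where you differ is in how the recursion is resolved: the paper inducts on $h$, substituting the closed form for both $c(F^r_k,CD^d_{h-1})$ and $c(F^r_{k-r+1},CD^d_{h-1})$ and pushing the two product terms back together by direct algebraic manipulation, whereas you unroll the recursion in $h$ down to the trivial tree and induct on $k$, which isolates all the algebra into the single identity $(q^p-1)\sum_{s=0}^{h-1}q^{s}\prod_{j=0}^{p-2}(q^{s}-q^{j})=\prod_{j=0}^{p-1}(q^{h}-q^{j})$ with $q=d^{r-1}$, $p=\frac{k-1}{r-1}$; your telescoping argument for it checks out, since $P_p(q^{s+1})-P_p(q^s)=\big[q^{p-1}(q^{s+1}-1)-(q^s-q^{p-1})\big]\prod_{j=0}^{p-2}(q^s-q^j)=(q^p-1)q^s\prod_{j=0}^{p-2}(q^s-q^j)$ and $P_p(1)=0$ (which also excuses the harmless inclusion of the $s=0$ term, where both the formula and the true count vanish). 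Your route buys a cleaner, self-contained induction step and makes the Gaussian-binomial structure $\prod_{i=1}^{p}\frac{q^h-q^{i-1}}{q^i-1}=q^{\binom{p}{2}}\binom{h}{p}_q$ visible (your sketched direct count over the depths of the $p$ branching vertices is consistent with this), while the paper's induction on $h$ avoids introducing the auxiliary sum at the cost of a slightly messier combination of products; both are complete and correct.
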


\begin{proof}
For $k=r$, the formula of the theorem reads as 
\begin{align*}
c\big(F^r_k,CD^d_h\big)=\binom{d}{r} \cdot d^{h-1} \cdot \Bigg(\frac{d^{h\cdot (r-1)}-1}{d^{r-1}-1} \Bigg)=\binom{d}{r} \cdot \frac{d^{h\cdot r}- d^h}{d^r-d}\,, 
\end{align*}
and this agrees with equation~\eqref{StarForm}. For $h=1$, the formula of the theorem reads as
\begin{align*}
c\big(F^r_k,CD^d_h\big)=\binom{d}{r}^{\frac{k-1}{r-1}}\cdot \Big(\frac{r}{d} \Big)^{\frac{k-r}{r-1}} \cdot \prod_{i=1}^{\frac{k-1}{r-1}} \Bigg(\frac{d^{r-1}-d^{(i-1)\cdot (r-1)}}{d^{i\cdot (r-1)}-1} \Bigg)\,,
\end{align*}
which is equal to $0$ as soon as $k>r$, and $\binom{d}{r}$ when $k=r$. So this is again true because for $k>r$, it is clear that there cannot be any copies of $F^r_k$ in $CD^d_1$.

\medskip
Assume $k> r$ and $h> 1$. It is easy to see that the specialisation $T=CD^d_h$ and $D=F^r_k$ in equation~\eqref{The general recursion} yields the following recurrence relation:
\begin{align*}
c\big(F^r_k,CD^d_h\big)=d \cdot c\big(F^r_k,CD^d_{h-1}\big) + r\cdot \binom{d}{r}\cdot d^{(h-1)\cdot (r-1)} \cdot c\big(F^r_{k-r+1},CD^d_{h-1}\big)
\end{align*}
as all the $d$ branches of $CD^d_h$ are isomorphic to $CD^d_{h-1}$.

Making use of this recursion, we then continue the proof of the theorem by induction on $h$. Applying the induction hypothesis, we obtain
\begin{align*}
c\big(F^r_k,CD^d_h\big)&=\binom{d}{r}^{\frac{k-1}{r-1}}\cdot \Big(\frac{r}{d} \Big)^{\frac{k-r}{r-1}} \cdot d^{h-1} \cdot \Bigg[ \prod_{i=1}^{\frac{k-1}{r-1}} \Bigg(\frac{d^{(h-1)\cdot (r-1)}-d^{(i-1)\cdot (r-1)}}{d^{i\cdot (r-1)}-1} \Bigg)\\
& \hspace*{4cm} + d^{(h-1)\cdot (r-1)} \cdot \prod_{i=1}^{\frac{k-r}{r-1}} \Bigg(\frac{d^{(h-1)\cdot (r-1)}-d^{(i-1)\cdot (r-1)}}{d^{i\cdot (r-1)}-1} \Bigg) \Bigg]\,.
\end{align*}

We further manipulate this equation and we get
\begin{align*}
c\big(F^r_k,CD^d_h\big)&=\binom{d}{r}^{\frac{k-1}{r-1}}\cdot \Big(\frac{r}{d} \Big)^{\frac{k-r}{r-1}} \cdot d^{h-1} \cdot \Bigg[ d^{1-k}\cdot \prod_{i=1}^{\frac{k-1}{r-1}} \Bigg(\frac{d^{h\cdot (r-1)}-d^{i\cdot (r-1)}}{d^{i\cdot (r-1)}-1} \Bigg)\\
& \hspace*{4cm} + d^{r-k}\cdot d^{(h-1)\cdot (r-1)} \cdot \prod_{i=1}^{\frac{k-r}{r-1}} \Bigg(\frac{d^{h\cdot (r-1)}-d^{i\cdot (r-1)}}{d^{i\cdot (r-1)}-1} \Bigg) \Bigg]\\
&=\binom{d}{r}^{\frac{k-1}{r-1}}\cdot \Big(\frac{r}{d} \Big)^{\frac{k-r}{r-1}} \cdot d^{h-1}\\
&\cdot \Bigg[\frac{d^{h\cdot (r-1)} - d^{k-1}}{d^{k-1}\cdot (d^{h\cdot (r-1)}-1)}+\frac{d^{(h-1)\cdot(r-1)}(d^{k-1}-1)}{d^{k-r} \cdot (d^{h\cdot (r-1)}-1)}  \Bigg] \cdot
\prod_{i=1}^{\frac{k-1}{r-1}} \Bigg(\frac{d^{h\cdot (r-1)}-d^{(i-1)\cdot (r-1)}}{d^{i\cdot (r-1)}-1} \Bigg)\,,
\end{align*}
completing the induction step and thus the proof of the first part of the theorem.

\medskip
For the assertion on the limit, we note that
\begin{align*}
c\big(F^r_k,CD^d_h\big)&=\binom{d}{r}^{\frac{k-1}{r-1}}\cdot \Big(\frac{r}{d} \Big)^{\frac{k-r}{r-1}} \cdot d^{h-1} \cdot \Big(d^{h\cdot (k-1)} +\mathcal{O}\big(d^{h\cdot (k-r)} \big)\Big)\cdot \prod_{i=1}^{\frac{k-1}{r-1}} \big(d^{i\cdot (r-1)}-1\big)^{-1}\\
&=\binom{d}{r}^{\frac{k-1}{r-1}}\cdot \Big(\frac{r}{d} \Big)^{\frac{k-r}{r-1}} \cdot d^{-1} \cdot d^{k\cdot h} \cdot \prod_{i=1}^{\frac{k-1}{r-1}} \big(d^{i\cdot (r-1)}-1 \big)^{-1}  +\mathcal{O}\big(d^{h\cdot (k-r+1)} \big) \,,
\end{align*}
which implies that
\begin{align*}
\lim_{h\to \infty} \gamma\big(F^r_k,CD^d_h\big)=\frac{k!}{d} \cdot \dbinom{d}{r}^{\frac{k-1}{r-1}} \cdot  \Big(\frac{r}{d} \Big)^{\frac{k-r}{r-1}} \cdot \prod_{j=1}^{\frac{k-1}{r-1}}\big(d^{(r-1)j}-1\big)^{-1}
\end{align*}
as desired.
\end{proof}

\medskip
Our approach to the proof of Theorem~\ref{minVslimInf} consists of the following steps:
\begin{itemize}
\item First, we determine the density of $F^2_k$ in $CD^d_h$ as $h\to \infty$.
\item Next, we prove two auxiliary lemmas.
\item Employing the lemmas, we determine an explicit lower bound on $c(F^2_k,T)$ valid for all strictly $d$-ary trees $T$.
\item Finally, we mention that the bound on $\gamma(F^2_k,T)$ is achieved by complete $d$-ary trees in the limit.
\end{itemize}

\medskip
We replace $r$ with $2$ in the formula of $\lim_{h\to \infty} \gamma(F^r_k,CD^d_h)$ given in Theorem~\ref{CompProport} to obtain:

\begin{corollary}\label{FkrInCDh}
For the $k$-leaf binary caterpillar $F^2_k$, we have
\begin{align*}
\lim_{h\to \infty} \gamma(F^2_k,CD^d_h)= \frac{k!}{2} \cdot (d-1)^{k-1} \cdot \prod_{j=1}^{k-1}(d^j -1)^{-1}
\end{align*}
for every $d\geq 2$ and $k\geq 2$.
\end{corollary}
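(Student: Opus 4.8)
The plan is simply to specialise Theorem~\ref{CompProport} to the case $r=2$, since $F^2_k$ is by definition the $2$-ary caterpillar with $k$ leaves, and a complete $d$-ary tree is certainly a valid host for leaf-induced subtrees with $r=2\le d$. Thus I would start from the limit assertion
\begin{align*}
\lim_{h\to \infty} \gamma(F^r_k,CD^d_h)=\frac{k!}{d} \cdot \dbinom{d}{r}^{\frac{k-1}{r-1}} \cdot  \Big(\frac{r}{d} \Big)^{\frac{k-r}{r-1}} \cdot \prod_{j=1}^{\frac{k-1}{r-1}}\big(d^{(r-1)j}-1 \big)^{-1}
\end{align*}
and substitute $r=2$. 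Under this substitution the exponents become $\frac{k-1}{r-1}=k-1$ and $\frac{k-r}{r-1}=k-2$, the binomial coefficient becomes $\binom{d}{2}=\frac{d(d-1)}{2}$, and the product index $(r-1)j$ becomes simply $j$, so the right-hand side reads
\begin{align*}
\frac{k!}{d} \cdot \Big(\tfrac{d(d-1)}{2}\Big)^{k-1} \cdot \Big(\tfrac{2}{d}\Big)^{k-2} \cdot \prod_{j=1}^{k-1}\big(d^{j}-1 \big)^{-1}.
\end{align*}

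The remaining work is the routine algebraic simplification of the prefactor. Collecting the powers of $d$ gives $d^{-1}\cdot d^{k-1}\cdot d^{-(k-2)}=d$, and collecting the powers of $2$ gives $2^{-(k-1)}\cdot 2^{k-2}=2^{-1}$, so the $d$ in the numerator cancels the $d$ in the denominator of $\frac{k!}{d}$ up to the factor $\tfrac12$, leaving
\begin{align*}
\frac{k!}{2}\cdot (d-1)^{k-1}\cdot \prod_{j=1}^{k-1}\big(d^{j}-1 \big)^{-1},
\end{align*}
which is exactly the claimed expression. Since $2\le d$ is guaranteed by the standing hypothesis $d\ge 2$, the identity holds for all the stated $d$ and $k$, and I would note the boundary case $k=2$ separately only if desired (there both sides equal $1$).

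There is no real obstacle here: the statement is a direct corollary, and the only thing to be careful about is bookkeeping of the exponents of $d$ and $2$ when simplifying $\big(\tfrac{d(d-1)}{2}\big)^{k-1}\big(\tfrac{2}{d}\big)^{k-2}=\tfrac{d(d-1)^{k-1}}{2}$. I would therefore keep the proof to a single short paragraph performing exactly this substitution and cancellation, explicitly citing Theorem~\ref{CompProport} for the case $r=2$.
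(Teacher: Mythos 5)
Your proposal is correct and is exactly the paper's own argument: the corollary is obtained by substituting $r=2$ into the limit formula of Theorem~\ref{CompProport} and simplifying the prefactor, which is all the paper does. One trivial bookkeeping slip: your intermediate claim $d^{-1}\cdot d^{k-1}\cdot d^{-(k-2)}=d$ should read $=1$ (or omit the factor $d^{-1}$ coming from $\tfrac{k!}{d}$), but the final identity $\big(\tfrac{d(d-1)}{2}\big)^{k-1}\big(\tfrac{2}{d}\big)^{k-2}=\tfrac{d\,(d-1)^{k-1}}{2}$ is right and gives the stated value.
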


As a second step, we need two lemmas. Given positive integers $d\geq 2$ and $k\geq 3$, set
\begin{align*}
V_{d,k}=\Big\{(i_1,i_2,\ldots, i_d):~& i_1,i_2,\ldots,i_d~\text{nonnegative integers},\\
& i_1+i_2+\cdots +i_d=k,~\text{and none of them is $k$} \Big\}\,.
\end{align*}

\begin{lemma}\label{lemUseful}
For every given positive integer $k\geq 3$, we have
\begin{align*}
\sup_{\substack{0<x_1,x_2,\ldots,x_d<1\\x_1+x_2+\cdots + x_d=1}} \frac{\sum_{1\leq i<j\leq d} \Big(x_i\cdot x_j^{-1+k}+x_j\cdot x_i^{-1+k} \Big)}{1-\sum_{i=1}^d x_i^k} =\frac{1}{k}
\end{align*}
for every positive integer $d\geq 2$. 
\end{lemma}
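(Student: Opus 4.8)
The plan is to reduce the fraction to an expression that the multinomial theorem handles directly. First I would simplify numerator and denominator. Since each unordered pair contributes $x_ix_j^{k-1}+x_jx_i^{k-1}$, the numerator equals $\sum_{i\ne j}x_ix_j^{k-1}=\sum_{j=1}^d x_j^{k-1}(1-x_j)=P-Q$, where I write $P=\sum_{j=1}^d x_j^{k-1}$ and $Q=\sum_{j=1}^d x_j^k$. The denominator is $1-Q$, and it is strictly positive: from $0<x_j<1$ we get $x_j^k<x_j$, hence $Q<\sum_j x_j=1$. So the quantity under the supremum is $(P-Q)/(1-Q)$, and the asserted identity splits into proving $k(P-Q)\le 1-Q$ for every admissible point, together with exhibiting admissible points on which the ratio tends to $1/k$.

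For the upper bound I would establish the homogeneous inequality
\[
\Big(\sum_{i=1}^d x_i\Big)^{k}\ge \sum_{j=1}^d x_j^{k}+k\sum_{\substack{1\le i,j\le d\\ i\ne j}}x_ix_j^{k-1},\qquad x_1,\dots,x_d\ge 0.
\]
Expanding the left-hand side by the multinomial theorem, $\sum_j x_j^k$ collects the monomials indexed by a single nonzero exponent $k$, and $k\sum_{i\ne j}x_ix_j^{k-1}$ collects exactly the monomials indexed by a permutation of $(k-1,1,0,\dots,0)$; here the hypothesis $k\ge 3$ is essential, since it guarantees that these two families are disjoint and, crucially, that the monomials $x_ix_j^{k-1}$ attached to distinct ordered pairs $(i,j)$ are pairwise distinct (for $k=2$ one double-counts, and the inequality genuinely fails). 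Every remaining multinomial term is a product of nonnegative reals with a positive coefficient, hence nonnegative, which yields the display. Specialising to $\sum_i x_i=1$ and using $\sum_{i\ne j}x_ix_j^{k-1}=P-Q$ gives $1\ge Q+k(P-Q)$, i.e.\ exactly $k(P-Q)\le 1-Q$, so $(P-Q)/(1-Q)\le 1/k$ throughout the open simplex.

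For the matching lower bound I would evaluate the ratio along the family $x_1=1-(d-1)t$, $x_2=\dots=x_d=t$ as $t\to 0^+$. A first-order expansion gives numerator $\sum_{i\ne j}x_ix_j^{k-1}=(d-1)t+\mathcal{O}(t^2)$ and denominator $1-Q=k(d-1)t+\mathcal{O}(t^2)$ --- here one again uses $k\ge 3$, so that the contribution $(d-1)t^{k-1}$ of the small coordinates is $o(t)$ --- whence the ratio tends to $1/k$. Together with the upper bound this proves that the supremum equals $1/k$; note it is a genuine supremum, the extremal behaviour appearing only in the limit towards a vertex of the simplex.

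I expect the multinomial bookkeeping --- and in particular locating precisely where $k\ge 3$ is needed --- to be the only delicate point. It is worth contrasting with the naive route: trying to prove the pointwise inequality $k\sum_j x_j^{k-1}\le 1+(k-1)\sum_j x_j^k$ directly via Lagrange multipliers or a convexity argument runs into the fact that $x\mapsto x-kx^{k-1}+(k-1)x^k$ is convex on an initial subinterval of $[0,1]$ and concave on the rest, forcing an awkward two-level case analysis that the homogenisation above avoids entirely.
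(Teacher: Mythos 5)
Your proof is correct and follows essentially the same route as the paper: the upper bound comes from the multinomial expansion of $\big(\sum_i x_i\big)^k=1$, isolating the permutations of $(k-1,1,0,\ldots,0)$ (which carry coefficient $k$) and discarding the remaining nonnegative terms, and the value $1/k$ is attained in the limit along a family approaching a vertex of the simplex. The only cosmetic differences are that you phrase the bound as a homogeneous inequality before specialising to $\sum_i x_i=1$, and your limiting family $x_1=1-(d-1)t$, $x_2=\cdots=x_d=t$ stays in the open domain, whereas the paper evaluates at $(0,\ldots,0,\epsilon,1-\epsilon)$ and lets $\epsilon\to 0$.
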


\begin{proof}
Fix $d\geq 2$ and $k\geq 3$. Let $V^*_{d,k}$ denote the maximal subset of $V_{d,k}$ that contains no permutation of $\{1,k-1,\KeepStyleUnderBrace{0,0,\ldots,0}_{(d-2)~0's}\}$. Then we have the decomposition
\begin{align*}
1-\sum_{i=1}^d x_i^{k}&=\sum_{\substack{(i_1,i_2,\ldots,i_d)\in V^*_{d,k}}}\dbinom{k}{i_1,i_2,\ldots,i_d}~ \prod_{j=1}^d x_j^{i_j}\\
& +k\cdot \sum_{1\leq i<j \leq d} \Big( x_i \cdot x_j^{k-1}+ x_i^{k-1} \cdot x_j\Big)
\end{align*}
by the Multinomial Theorem. From that, we immediately deduce the inequality
\begin{align*}
\frac{1}{1-\sum_{i=1}^d x_i^{k}}\cdot \sum_{1\leq i<j \leq d} \Big(x_i \cdot x_j^{k-1}+ x_i^{k-1} \cdot x_j\Big) \leq k^{-1}\,.
\end{align*}

This shows that the function
\begin{align*}
F_{d,k}(x_1,x_2,\ldots,x_d)=\frac{\sum_{1\leq i<j\leq d} \Big(x_i\cdot x_j^{-1+k}+x_j\cdot x_i^{-1+k} \Big)}{1-\sum_{i=1}^d x_i^k}
\end{align*}
is bounded from above, and so its supremum on the domain defined by $\sum_{i=1}^d x_i=1$ and the inequalities $0<x_1,x_2,\ldots,x_d<1$ exists and is finite:
\begin{align*}
\sup_{\substack{0<x_1,x_2,\ldots,x_d<1\\x_1+x_2+\cdots + x_d=1}} F_{d,k}(x_1,x_2,\ldots,x_d) \leq k^{-1}\,.
\end{align*}

On the other hand, we note that
\begin{align*}
F_{d,k}(\KeepStyleUnderBrace{0,0,\ldots,0}_{(d-2)~0's},\epsilon,1-\epsilon) &=\frac{\epsilon \cdot (1-\epsilon)^{k-1}+ \epsilon^{k-1} \cdot (1-\epsilon)}{1-\epsilon^k - (1-\epsilon)^k}\\
&=\frac{\epsilon \cdot (1-\epsilon)^{k-1}+ \epsilon^{k-1} \cdot (1-\epsilon)}{\epsilon\cdot \Big(\sum_{i=0}^{k-1} (1-\epsilon)^i\Big)-\epsilon^k }
\end{align*}
for every $\epsilon >0$. It follows that
\begin{align*}
\lim_{\epsilon \to 0}F_{d,k}(\KeepStyleUnderBrace{0,0,\ldots,0}_{(d-2)~0's},\epsilon,1-\epsilon) &=\lim_{\epsilon \to 0} \frac{(1-\epsilon)^{k-1}+ \epsilon^{k-2} \cdot (1-\epsilon)}{-\epsilon^{k-1}+ \sum_{i=0}^{k-1} (1-\epsilon)^i}=\frac{1}{k}\,,
\end{align*}
as soon as $k\geq 3$. Hence, we obtain
\begin{align*}
\sup_{\substack{0<x_1,x_2,\ldots,x_d<1\\x_1+x_2+\cdots + x_d=1}} F_{d,k}(x_1,x_2,\ldots,x_d)=\frac{1}{k}\,,
\end{align*}
which is the desired result.
\end{proof}

The following lemma gives us the minimum of the function whose supremum is computed in Lemma~\ref{lemUseful}.

\begin{lemma}\label{LemmaForMinDensityBiCater}
For any positive integers $d\geq 2$ and $k\geq 3$, the function
\begin{align*}
F_{d,k}(x_1,x_2,\ldots,x_d)=\frac{\sum_{1\leq i<j\leq d} \big(x_i\cdot x_j^{-1+k}+x_j\cdot x_i^{-1+k} \big) }{1-\sum_{i=1}^dx_i^k}
\end{align*}
subjected to the constraint $\sum_{i=1}^d x_i=1$, on the domain given by the inequalities $0<x_1,x_2,\ldots,x_d<1$ has its minimum at $\displaystyle x_1=x_2=\cdots =x_d=d^{-1}$, i.e.,
\begin{align*}
F_{d,k}(x_1,x_2,\ldots,x_d)\geq F_{d,k}\big(\underbrace{d^{-1},d^{-1},\ldots,d^{-1}}_{d~\text{terms}}\big)=\frac{d-1}{d^{k-1}-1}
\end{align*}	
for all $0<x_1,x_2,\ldots,x_d<1$ such that $\sum_{i=1}^d x_i=1$.
\end{lemma}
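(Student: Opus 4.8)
The plan is to reformulate $F_{d,k}$ in terms of power sums and take it apart with three elementary tools — telescoping, the power--mean inequality, and Jensen's inequality — reducing everything to one polynomial inequality, which I then settle by a short case split. Write $p_m:=\sum_{i=1}^{d}x_i^m$. Since
\[
\sum_{1\le i<j\le d}\big(x_ix_j^{k-1}+x_jx_i^{k-1}\big)=\sum_{j=1}^{d}x_j^{k-1}\sum_{i\ne j}x_i=\sum_{j=1}^{d}x_j^{k-1}(1-x_j)=p_{k-1}-p_k,
\]
one has $F_{d,k}=\dfrac{p_{k-1}-p_k}{1-p_k}$, which is positive and finite on the open simplex because $0<x_i<1$ forces $p_k<p_{k-1}<p_1=1$. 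Passing to $1/F_{d,k}$ and introducing the positive weights $w_i:=x_i(1-x_i)$, the factorisation $1-x_i^{k-1}=(1-x_i)\sum_{l=0}^{k-2}x_i^l$ gives $1-p_k=\sum_i w_i\sum_{l=0}^{k-2}x_i^l$ and $p_{k-1}-p_k=\sum_i w_ix_i^{k-2}$, so that
\[
\frac{1}{F_{d,k}}=\sum_{l=0}^{k-2}\frac{m_l}{m_{k-2}},\qquad m_l:=\sum_{i=1}^{d}w_ix_i^l.
\]
A direct computation gives $F_{d,k}(d^{-1},\dots,d^{-1})=\dfrac{(d-1)d^{1-k}}{1-d^{1-k}}=\dfrac{d-1}{d^{k-1}-1}$, so the goal is the bound $1/F_{d,k}\le\sum_{l=0}^{k-2}d^{k-2-l}=\dfrac{d^{k-1}-1}{d-1}$.

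Next I would view $(w_i/W)_{i=1}^{d}$, with $W:=\sum_i w_i=1-p_2>0$, as a probability distribution on $\{x_1,\dots,x_d\}$ with moments $m_l/W=E[X^l]$. The power--mean inequality gives $E[X^l]\le\big(E[X^{k-2}]\big)^{l/(k-2)}$ for $0\le l\le k-2$, hence $\dfrac{m_l}{m_{k-2}}\le\big(E[X^{k-2}]\big)^{-(k-2-l)/(k-2)}$; since $E[X^{k-2}]<1$ and this exponent is $\le0$, the single estimate $E[X^{k-2}]\ge d^{-(k-2)}$ would yield $\dfrac{m_l}{m_{k-2}}\le d^{k-2-l}$ for every $l$, and summing finishes the argument. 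To obtain that estimate I would apply Jensen to the convex map $t\mapsto t^{k-2}$: $E[X^{k-2}]\ge\big(E[X]\big)^{k-2}=\Big(\dfrac{p_2-p_3}{1-p_2}\Big)^{k-2}$, which reduces everything to
\[
\frac{p_2-p_3}{1-p_2}\ge\frac1d,\qquad\text{equivalently}\qquad (d+1)p_2\ge 1+dp_3,\qquad\text{equivalently}\qquad \sum_{i=1}^{d}x_i(1-x_i)(dx_i-1)\ge 0.
\]
Combined with the value $F_{d,k}(d^{-1},\dots,d^{-1})=\tfrac{d-1}{d^{k-1}-1}$ computed above, this gives the lemma; tracing the equality cases backwards (strict convexity of $t\mapsto t^{k-2}$ when $k\ge4$, and a direct analysis of the displayed inequality when $k=3$) locates the minimum at the barycentre.

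The real work is the last displayed inequality. Put $\delta_i:=dx_i-1$, so $\sum_i\delta_i=0$ and $-1<\delta_i<d-1$; a short computation (using $\sum_i\delta_i=0$) turns it into $\sum_{i=1}^{d}\delta_i^2\big(d-2-\delta_i\big)\ge 0$. For $d=2$ this is the identity $-\sum_i\delta_i^3=0$. For $d\ge3$ I would split on $\max_i x_i$. If $x_i\le\tfrac{d-1}{d}$ for every $i$, then $d-2-\delta_i\ge 0$ throughout and the left side is a sum of nonnegative terms. Otherwise exactly one coordinate, say $x_1$, exceeds $\tfrac{d-1}{d}$ — two cannot, since $2\cdot\tfrac{d-1}{d}>1$ for $d\ge3$ — and then $x_i<\tfrac1d$ for $i\ge2$; writing $u_i:=\tfrac1d-x_i>0$ and $v:=x_1-\tfrac1d=\sum_{i\ge2}u_i$, which lies in $\big(\tfrac{d-2}{d},\tfrac{d-1}{d}\big)$, the claim rearranges to
\[
\sum_{i\ge2}u_i^2\Big(\tfrac{d-2}{d}+u_i\Big)\ \ge\ v^2\Big(v-\tfrac{d-2}{d}\Big).
\]
Here I would bound the left side below by Cauchy--Schwarz, $\sum_{i\ge2}u_i^2\ge v^2/(d-1)$, together with the power--mean estimate $\sum_{i\ge2}u_i^3\ge v^3/(d-1)^2$; after dividing by $v^2$ and clearing denominators the required inequality collapses to $v\le\tfrac{d-1}{d}$, which is exactly the upper bound already available on $v$. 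This ``one dominant coordinate'' case — tight as $x_1\to1$, i.e. towards a vertex of the simplex — is the one that resists convexity arguments (the curve $t\mapsto t^2(d-2-t)$ has no supporting line making a tangent--line argument work over the whole admissible range of $x_i$), so I expect it to be the main obstacle; everything before it is routine manipulation.
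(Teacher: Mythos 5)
Your proof is correct, but it takes a genuinely different route from the paper. The paper's proof is a one-shot application of Muirhead's inequality: it expands $1-\sum_i x_i^k$ by the Multinomial Theorem over the exponent vectors in $V_{d,k}$, notes that $(k-1,1,0,\dots,0)$ majorises every such vector, bounds each symmetrised monomial by the symmetrisation of $x_1^{k-1}x_2$ (which is, up to the factor $(d-2)!$, the numerator of $F_{d,k}$), and then sums the multinomial coefficients to $d^k-d$, giving $1/F_{d,k}\le\frac{d^k-d}{d(d-1)}=\frac{d^{k-1}-1}{d-1}$ directly. You instead rewrite $F_{d,k}=\frac{p_{k-1}-p_k}{1-p_k}$ in power sums, telescope $1/F_{d,k}=\sum_{l=0}^{k-2}m_l/m_{k-2}$ with $m_l=\sum_i x_i(1-x_i)x_i^l$, and reduce, via Lyapunov/power-mean and Jensen with respect to the weights $x_i(1-x_i)$, to the single cubic inequality $\sum_i x_i(1-x_i)(dx_i-1)\ge 0$ on the simplex, which you settle through the substitution $\delta_i=dx_i-1$ and a two-case analysis (termwise positivity when every $x_i\le\frac{d-1}{d}$, and Cauchy--Schwarz plus the power mean bound $\sum_{i\ge 2}u_i^3\ge v^3/(d-1)^2$ when one coordinate dominates). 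I checked the computations: the identities $1-p_k=\sum_{l}m_l$ and $p_{k-1}-p_k=m_{k-2}$, the moment inequalities, the equivalence with $(d+1)p_2\ge 1+dp_3$, and the collapse of the dominant-coordinate case to $v\le\frac{d-1}{d}$ are all correct. What Muirhead buys the paper is brevity and a ready-made equality condition; what your argument buys is elementariness (nothing beyond Cauchy--Schwarz, power means and Jensen) plus the intermediate inequality $\frac{p_2-p_3}{1-p_2}\ge\frac 1d$, which has some independent interest. One small remark: no equality analysis is needed to locate the minimum --- your bound $F_{d,k}\ge\frac{d-1}{d^{k-1}-1}$ together with the direct evaluation $F_{d,k}(d^{-1},\dots,d^{-1})=\frac{d-1}{d^{k-1}-1}$ already yields exactly what the lemma asserts; and indeed the minimiser need not be unique (for $d=2$, $k=3$ the function is identically $\frac13$), so the vague ``tracing equality backwards'' step is best dropped rather than repaired.
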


\medskip
Let $A = (a_1,a_2,\ldots, a_n)$ and $B = (b_1, b_2, \ldots, b_n)$ be two vectors of real numbers. Assume $a_1\geq a_2\geq \cdots \geq a_n$ and $b_1\geq b_2\geq \cdots \geq b_n$ in this order. We say that the vector $A$ \textit{majorises} the vector $B$ if 
\begin{align*}
\sum_{i=1}^n a_i= \sum_{i=1}^n b_i\,,
\end{align*}
and for every $k\in \{1,2,\ldots,n-1\}$, 
\begin{align*}
\sum_{i=1}^k a_i \geq  \sum_{i=1}^k b_i\,.
\end{align*}

\begin{theorem}[Muirhead's Inequality]\label{Thm:Muirhead}
Consider a sequence $(x_1,x_2,\ldots, x_n)$ of positive real numbers. If $(a_1,a_2,\ldots, a_n)$ majorises $(b_1, b_2, \ldots, b_n)$ then it holds that
\begin{align*}
\sum_{\pi \in S_n}~\prod_{i=1}^n x_{\pi (i)}^{a_i} \geq \sum_{\pi \in S_n}~\prod_{i=1}^n x_{\pi (i)}^{b_i}\,,
\end{align*}
where the sum is taken over the set $S_n$ of all permutations of $\{1,2,\ldots,n \}$. There is equality if and only if either $a_i=b_i$ for all $i\in\{1,2,\ldots,n\}$, or all the $x_i$'s are equal.
\end{theorem}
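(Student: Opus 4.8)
The plan is to establish Muirhead's inequality by the classical two-step route: first reduce the majorisation relation to a finite chain of elementary two-coordinate transfers, and then show that each transfer does not increase the associated symmetric sum. For an exponent tuple $c=(c_1,\ldots,c_n)$ write $[c]=\sum_{\pi\in S_n}\prod_{i=1}^n x_{\pi(i)}^{c_i}$; note at once that $[c]$ depends only on the multiset of exponents, since permuting the entries of $c$ merely reindexes the sum over $S_n$. The assertion to be proved is then $[a]\ge[b]$ whenever $a$ majorises $b$, together with the stated equality characterisation.

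The core computation is a \emph{single-transfer lemma}. Suppose $a'$ is obtained from $a$ by choosing two coordinates with values $\alpha>\beta$ and replacing them by $\alpha-\delta,\ \beta+\delta$ with $0\le\delta\le\alpha-\beta$. I would prove $[a]\ge[a']$ by pairing the permutations of $S_n$ two at a time: couple each $\pi$ with the permutation obtained by swapping the two variables assigned to the two distinguished positions. Within such a pair the product of all remaining factors is a common constant $C\ge 0$, and writing the two distinguished variables as $u,v$ with $u\ge v>0$, the pair contributes $C\,(u^\alpha v^\beta+u^\beta v^\alpha)$ to $[a]$ and $C\,(u^{\alpha-\delta}v^{\beta+\delta}+u^{\beta+\delta}v^{\alpha-\delta})$ to $[a']$. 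Setting $M=\alpha-\beta$ and factoring out $u^\beta v^\beta$, the required inequality reduces to $(u^{\delta}-v^{\delta})(u^{M-\delta}-v^{M-\delta})\ge 0$, which is immediate because $u\ge v>0$ and $0\le\delta\le M$ force both factors to have the same sign. Summing over all pairs gives $[a]\ge[a']$, and the factorisation shows that a pair is an equality exactly when $u=v$; this supplies the equality analysis below.

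Next I would prove, for completeness, the \textbf{Hardy--Littlewood--P\'olya reduction}: if $a$ majorises $b$, both listed in nonincreasing order with equal total, then $b$ is reachable from $a$ by finitely many such transfers. The argument is induction on the distance $\sum_i|a_i-b_i|$. If $a\ne b$, let $p$ be the least index with $a_p\ne b_p$; the partial-sum inequalities force $a_p>b_p$, and since the totals agree there is a least index $q>p$ with $a_q<b_q$. Taking $\delta=\min(a_p-b_p,\,b_q-a_q)>0$ and transferring from coordinate $p$ to coordinate $q$, the chain $a_q+\delta\le b_q\le b_p\le a_p-\delta$ shows at once that the new tuple stays nonincreasing, still majorises $b$, and has strictly smaller distance to $b$. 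Iterating yields a finite chain $a=c^{(0)}\to c^{(1)}\to\cdots\to c^{(m)}=b$, and applying the single-transfer lemma along it gives $[a]=[c^{(0)}]\ge[c^{(1)}]\ge\cdots\ge[c^{(m)}]=[b]$. For equality, $[a]=[b]$ forces equality at every step; by the pairing analysis a nontrivial transfer is an equality only when all the $x_i$ coincide, so either the chain is empty (that is, $a=b$) or all the $x_i$ are equal.

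I expect the main obstacle to be the bookkeeping in the Hardy--Littlewood--P\'olya reduction: one must check that the chosen transfer simultaneously preserves the nonincreasing order, preserves every majorisation partial-sum inequality against $b$, and strictly decreases the distance to $b$ so that the induction terminates. The single inequality chain $a_q+\delta\le b_q\le b_p\le a_p-\delta$ is precisely what makes all three conditions hold at once, and getting this comparison exactly right is the delicate point. By contrast, the single-transfer lemma is a one-line factorisation once the permutations of $S_n$ are correctly paired.
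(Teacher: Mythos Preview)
The paper does not supply its own proof of Muirhead's inequality; it simply cites Hardy, Littlewood and P\'olya~\cite{hardy1952inequalities}. Your proposal is precisely the classical argument from that reference---the single-transfer lemma together with the reduction of majorisation to a finite chain of elementary transfers---so you have reproduced exactly the proof the paper invokes rather than offered an alternative.

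There is, however, one genuine slip in the bookkeeping you yourself flagged as delicate. The chain $a_q+\delta\le b_q\le b_p\le a_p-\delta$ does \emph{not} by itself guarantee that the transferred tuple remains nonincreasing. Take $a=(5,4,0)$ and $b=(3,3,3)$: your recipe gives $p=1$, $q=3$, $\delta=2$, and the new tuple $(3,4,2)$ is out of order at positions $1$ and $2$. The standard repair is either (i) to choose $p$ as the \emph{largest} index below $q$ with $a_p>b_p$, which forces $a_i=b_i$ for every $p<i<q$ and then the chain really does preserve the ordering, or (ii) simply to re-sort after each transfer---harmless, since you already observed that $[c]$ depends only on the multiset of exponents, and the partial-sum inequalities against $b$ survive re-sorting because the sum of the $k$ largest entries of the new tuple dominates the sum of any $k$ of its entries. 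With either adjustment the proof is complete and matches the reference the paper cites.
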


A proof of this result can be found, for instance, in the book on inequalities by Hardy, Littlewood and P\'{o}lya~\cite[p.~44-45]{hardy1952inequalities}.

\begin{proof}[Proof of Lemma~\ref{LemmaForMinDensityBiCater}]
Fix $d\geq 2$ and $k\geq 3$. Let $S_d$ be the set of all permutations of the indices $1,2,\ldots,d$. Since $F_{d,k}(x_1,x_2,\ldots,x_d)>0$ by definition, the Multinomial Theorem gives
\begin{align*}
\frac{1}{F_{d,k}(x_1,x_2,\ldots,x_d)}=\frac{\sum_{(i_1,i_2,\ldots,i_d)\in V_{d,k}}\dbinom{k}{i_1,i_2,\ldots,i_d}  \prod_{j=1}^d x_j^{i_j}}{\sum_{1\leq i<j\leq d} \big(x_i\cdot x_j^{-1+k}+x_j\cdot x_i^{-1+k} \big)}\,,
\end{align*}
where $V_{d,k}$ is the set
\begin{align*}
\Big\{(i_1,i_2,\ldots, i_d):~& i_1,i_2,\ldots,i_d~\text{nonnegative integers},\\
& i_1+i_2+\cdots +i_d=k,~\text{and none of them is $k$} \Big\}\,.
\end{align*}

Note that for every $(i_1, i_2, \ldots, i_d) \in V_{d,k}$ such that $i_1\geq i_2\geq \cdots \geq i_d$, the vector  $(k-1,1,\underbrace{0,0,\ldots,0}_{(d-2)~0's})$ majorises $(i_1, i_2, \ldots, i_d)$. Thus, we obtain
\begin{align*}
(d-2)! \cdot \sum_{1\leq i<j\leq d} \Big(x_i\cdot x_j^{-1+k}+x_j\cdot x_i^{-1+k} \Big) \geq \sum_{\pi \in S_d}~\prod_{j=1}^d x_{\pi(j)}^{i_j}
\end{align*}
by Muirhead's Inequality. On the other hand, we also have
\begin{align*}
\frac{d!}{F_{d,k}(x_1,x_2,\ldots,x_d)}=\frac{\sum_{(i_1,i_2,\ldots,i_d)\in V_{d,k}}\dbinom{k}{i_1,i_2,\ldots,i_d}  \sum_{\pi \in S_d}~\prod_{j=1}^d x_{\pi(j)}^{i_j}}{\sum_{1\leq i<j\leq d} \big(x_i\cdot x_j^{-1+k}+x_j\cdot x_i^{-1+k} \big)}\,.
\end{align*}

Therefore, it follows that
\begin{align*}
\frac{d!}{F_{d,k}(x_1,x_2,\ldots,x_d)} \leq (d-2)! \cdot \sum_{(i_1,i_2,\ldots,i_d)\in V_{d,k}}\dbinom{k}{i_1,i_2,\ldots,i_d} = (d-2)! \cdot (d^k -d)\,,
\end{align*}
and hence, we establish that
\begin{align*}
F_{d,k}(x_1,x_2,\ldots,x_d) \geq \frac{d-1}{d^{k-1}-1}=F_{d,k}\big(\underbrace{d^{-1},d^{-1},\ldots,d^{-1}}_{d~\text{terms}}\big)\,.
\end{align*}
Moreover,
\begin{align*}
F_{d,k}\big(\underbrace{d^{-1},d^{-1},\ldots,d^{-1}}_{d~\text{terms}}\big)=\frac{d-1}{d^{k-1}-1}\,.
\end{align*}
This completes the proof.
\end{proof}

We can now give a proof of  Theorem~\ref{minVslimInf}.

\begin{proof}[Proof of Theorem~\ref{minVslimInf}]
Fix $d\geq 2$. First of all, we want to prove that
\begin{align}\label{lim inf binary caterpillars in d ary trees}
\liminf_{\substack{|T|\to \infty \\ T~\text{strictly $d$-ary tree}}}  \gamma(F^2_k,T) =\frac{k!}{2}\cdot  (d-1)^{k-1} \cdot \prod_{j=1}^{k-1}(d^j-1)^{-1}
\end{align}
for every $k\geq 2$. Our approach is an adaptation of~\cite[Proof of Theorem~7]{czabarka2016inducibility}. Setting
\begin{align*}
b_k=\frac{1}{2} \cdot (d-1)^{k-1} \cdot \prod_{j=1}^{k-1}(d^j-1)^{-1}\,,
\end{align*} 
we show that for every positive integer $k\geq 2$, the inequality
\begin{align*}
c(F^2_k,T)\geq b_k\cdot n^k-\frac{1}{(k-1)!} \cdot n^{k-1}
\end{align*}
is satisfied for every strictly $d$-ary tree $T$ with $n$ leaves.

\medskip
The case $k=2$ is essentially obvious as $c(F^2_2,T)=\binom{|T|}{2}$ and $b_2=1/2$ by definition. The proof of the general case goes by induction on $n$. Since $d\geq 2$, it is easy to see that
\begin{align*}
\frac{d^k -1}{d-1}=d^{k-1}+d^{k-2}+\cdots + d +1 \geq k
\end{align*}
for every $k\geq 2$. Thus, we have $b_k \leq 1/(2\cdot (k-1)!) \leq 1/(k-1)!$ meaning that the base case $n=1$ is true. We can then assume that $k\geq 3$ and $n>1$.

For the induction step, consider the $d$ branches $T_1,T_2,\ldots, T_d$ of an arbitrary strictly $d$-ary tree $T$ and suppose that they have $\alpha_1 \cdot n,~\alpha_2 \cdot n, \ldots,~\alpha_d \cdot n$ leaves, respectively. In this setting, by replacing $D$ with $F^2_k$ in equation~\eqref{The general recursion}, we obtain the following formula:
\begin{align}\label{copyinT}
c\big(F^2_k,T \big)=\sum_{i=1}^d c\big(F^2_k,T_i\big)+\sum_{\substack{1\leq i,j\leq d\\ i\neq j}} \alpha_i\cdot n \cdot c\big(F^2_{k-1},T_j\big)\,,
\end{align}
which is valid for every $k\geq 3$. Next, we apply the induction hypothesis: this gives
\begin{align*}
c\big(F^2_k,T\big)&\geq \sum_{i=1}^d \Big(b_k\cdot (\alpha_i \cdot n)^k-\frac{1}{(k-1)!}\cdot (\alpha_i \cdot n)^{k-1} \Big)\\
& +\sum_{\substack{1\leq i,j\leq d\\ i\neq j}} \alpha_i\cdot n \cdot \Big( b_{k-1}\cdot (\alpha_j \cdot n)^{k-1}-\frac{1}{(k-2)!} \cdot (\alpha_j \cdot n)^{k-2}\Big)\\
&= \Bigg( b_k \cdot \sum_{i=1}^d \alpha_i^k  ~ + ~ b_{k-1}\cdot \sum_{\substack{1\leq i,j\leq d\\ i\neq j}} \alpha_i \cdot \alpha_j^{k-1} \Bigg) \cdot n^k\\
& - \frac{1}{(k-2)!} \cdot \Bigg(\frac{1}{k-1}\cdot \sum_{i=1}^d \alpha_i^{k-1} + \sum_{\substack{1\leq i,j\leq d\\ i\neq j}} \alpha_i \cdot \alpha_j^{k-2}\Bigg) \cdot n^{k-1}\,.
\end{align*}

Using the identity
\begin{align*}
b_k=\frac{d-1}{d^{k-1}-1}\cdot b_{k-1}
\end{align*}
along with Lemma~\ref{LemmaForMinDensityBiCater}, we get
\begin{align}\label{Interm}
b_k \leq b_{k-1} \cdot \frac{\sum_{\substack{1\leq i,j\leq d\\ i\neq j}}~\alpha_i \cdot \alpha_j^{k-1}}{1-\sum_{i=1}^d \alpha_i^k}\,,
\end{align} 
and this takes us to the inequality
\begin{align*}
c(F^2_k,T)\geq b_k \cdot n^k - \frac{1}{(k-2)!}\cdot \Bigg(\frac{1}{k-1}\cdot \sum_{i=1}^d \alpha_i^{k-1} + \sum_{\substack{1\leq i,j\leq d\\ i\neq j}} \alpha_i \cdot \alpha_j^{k-2} \Bigg) \cdot n^{k-1}\,.
\end{align*}

On the other hand, we know from Lemma~\ref{lemUseful} that 
\begin{align*}
\frac{\sum_{\substack{1\leq i,j\leq d\\ i\neq j}}~\alpha_i \cdot \alpha_j^{k-2} }{1- \sum_{i=1}^d \alpha_i^{k-1}} \leq \frac{1}{k-1}
\end{align*}
for all $0<\alpha_1,\alpha_2,\ldots,\alpha_d<1$ such that $\sum_{i=1}^d \alpha_i=1$, provided that $k\geq 4$. In this case, we are done immediately. However, for $k=3$, equation~\eqref{copyinT} becomes
\begin{align*}
c\big(F^2_3,T \big)&=\sum_{i=1}^d c\big(F^2_3,T_i\big)+\sum_{\substack{1\leq i,j\leq d\\ i\neq j}} \alpha_i\cdot n \cdot \binom{\alpha_j \cdot n}{2}\\
&\geq \sum_{i=1}^d \Big(b_3\cdot (\alpha_i \cdot n)^3-\frac{1}{2}\cdot (\alpha_i \cdot n)^2 \Big) +\frac{1}{2}\cdot \sum_{\substack{1\leq i,j\leq d\\ i\neq j}} \alpha_i\cdot n \cdot \big((\alpha_j \cdot n)^2- \alpha_j \cdot n \big)\\
&= \Bigg( b_3 \cdot \sum_{i=1}^d \alpha_i^3  ~ + ~ \frac{1}{2} \cdot \sum_{\substack{1\leq i,j\leq d\\ i\neq j}} \alpha_i \cdot \alpha_j^2 \Bigg) \cdot n^3 - \frac{1}{2} \cdot \Bigg(\sum_{i=1}^d \alpha_i^2 + \sum_{\substack{1\leq i,j\leq d\\ i\neq j}} \alpha_i \cdot \alpha_j\Bigg) \cdot n^2\,,
\end{align*}
where the inequality in the second step follows from the induction hypothesis. Therefore, using the identity
\begin{align*}
1- \sum_{i=1}^d \alpha_i^2 =\sum_{\substack{1\leq i,j\leq d\\ i\neq j}}~\alpha_i \cdot \alpha_j\,,
\end{align*}
together with inequality~\eqref{Interm} (as $b_2=1/2$), we deduce that
\begin{align*}
c\big(F^2_3,T \big)\geq  b_3 \cdot n^3 - \frac{1}{2} \cdot n^2\,,
\end{align*}
and this completes the induction proof.

\medskip
Notice that the right side of equation~\eqref{lim inf binary caterpillars in d ary trees} appears already in Corollary~\ref{FkrInCDh}. That is, we have
\begin{align*}
\liminf_{\substack{|T|\to \infty \\ T~\text{strictly $d$-ary tree}}}  \gamma\big(F^2_k,T\big)=\lim_{h\to \infty} \gamma\big(F^2_k,CD^d_h \big)\,,
\end{align*}
and this finishes the proof of the first assertion of Theorem~\ref{minVslimInf}.

\medskip
Let us now tackle the second part of the theorem. The proof is similar to that of Theorem~3 ~in~\cite{AudaceStephanPaper1}.

\medskip
\textbf{Claim}: The sequence 
\begin{align*}
\Bigg(\min_{\substack{|T|=n\\T~\text{$d$-ary tree}}} \gamma\big(F^2_k,T \big)\Bigg)_{n\geq k}
\end{align*}
is nondecreasing, that is, we have
\begin{align*}
\min_{\substack{|T|=n-1\\T~\text{$d$-ary tree}}} \gamma\big(F^2_k,T \big) \leq \min_{\substack{|T|=n\\T~\text{$d$-ary tree}}} \gamma\big(F^2_k,T \big)
\end{align*}
for every $n\geq 1+k$. 

For the proof of the claim, let $T$ be a $d$-ary tree with leaf-set $L(T)$ such that $|T|\geq k$. For $l \in L(T)$, denote by $c_{l}(F^2_k,T)$ the number of subsets of leaves of $T$ that involve $l$ and induce a copy of $F^2_k$.
Thus, every leaf of $T$ is involved in $k \cdot c(F^2_k,T)/|T|$ copies of $F^2_k$ on average, and so there exists a leaf $l_1$ of $T$ for which the inequality
\begin{align}\label{InView}
c_{l_1}(F^2_k,T) \geq \frac{k \cdot c(F^2_k,T)}{|T|}
\end{align}
holds. The number of copies of $F^2_k$ in $T$ not involving the leaf $l_1$ is
\begin{align*}
c(F^2_k,T) - c_{l_1}(F^2_k,T)\leq \Big(1-\frac{k}{|T|} \Big)\cdot c(F^2_k,T)
\end{align*}
by virtue of relation~\eqref{InView}. Call $T^{-}$ the $d$-ary tree that results when the leaf $l_1$ of $T$ is removed and the unique vertex adjacent to $l_1$ (if it has outdegree $2$ in $T$) is suppressed. Thus, since $T$ is an arbitrary $d$-ary tree, we get
\begin{align*}
\min_{\substack{|T^{\prime}|=n-1 \\ T^{\prime}~\text{$d$-ary tree}}}  c(F^2_k,T^{\prime}) \leq  c(F^2_k,T^{-}) \leq \Big(1-\frac{k}{n} \Big) \min_{\substack{|T|=n \\ T~\text{$d$-ary tree}}}  c(F^2_k,T)\,,
\end{align*}
so that dividing both sides of this inequality by $\binom{n-1}{k}$, we obtain
\begin{align*}
\min_{\substack{|T^{\prime}|=n-1\\T^{\prime}~\text{$d$-ary tree}}} \gamma(F^2_k,T^{\prime}) \leq \min_{\substack{|T|=n\\T~\text{$d$-ary tree}}} \gamma(F^2_k,T)
\end{align*}
for every $n\geq 1+k$, showing that the sequence
\begin{align*}
\Bigg( \min_{\substack{|T|=n\\T~\text{$d$-ary tree}}} \gamma(F^2_k,T)\Bigg)_{n\geq k}
\end{align*}
is indeed nondecreasing.

Hence, since this sequence is also bounded from above for every $n\geq k$, one obtains
\begin{align*}
\lim_{n\to \infty}\min_{\substack{|T|=n\\T~\text{$d$-ary tree}}} \gamma\big(F^2_k,T \big)=\liminf_{\substack{|T|\to \infty \\ T~\text{$d$-ary tree}}} \gamma\big(F^2_k,T\big)
\end{align*}
and consequently, we get
\begin{align*}
\min_{\substack{|T|=n\\T~\text{$d$-ary tree}}} \gamma\big(F^2_k,T \big) \leq \liminf_{\substack{|T|\to \infty \\ T~\text{$d$-ary tree}}} \gamma\big(F^2_k,T\big)
\end{align*}
for every $n\geq k$. This completes the entire proof of the theorem.
\end{proof}

The proof of Corollary~\ref{cor:Theo} is now immediate as
\begin{align*}
c\big(F^2_k,CD^d_h\big)=\frac{(d-1)^{k-1}}{2}\cdot d^h \cdot \prod_{i=1}^{k-1} \Bigg(\frac{d^h-d^{i-1}}{d^i-1} \Bigg)
\end{align*}
for all $h\geq 1$ (see Theorem~\ref{CompProport}).

\section{Conclusion}

We conclude this short note with an open question. To formalise the question, we need to define a new class of binary trees. These trees are already considered in previous papers \cite{DossouOloryWagner,czabarka2016inducibility}. A binary tree $T$ is called \textit{even} if for every internal vertex $v$ of $T$, the number of leaves in the two branches of the subtree of $T$ rooted at $v$ differ at most by one.

It is easy to see that there is only one such a binary tree for every given number of leaves. We denote the $n$-leaf even binary tree by $E^2_n$; see Figure~\ref{E2.11} for the tree $E^2_{11}$.

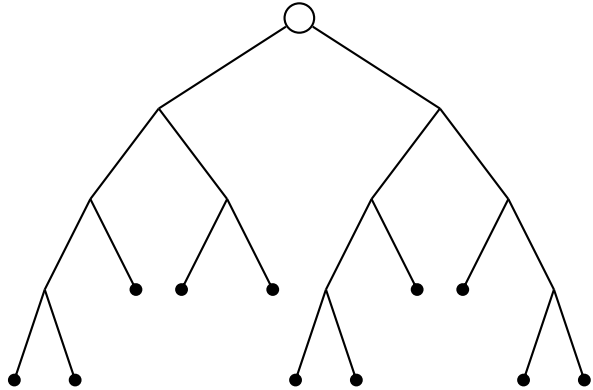
\begin{figure}[htbp]\centering
\begin{tikzpicture}[thick,level distance=12mm]
\tikzstyle{level 1}=[sibling distance=37mm]
\tikzstyle{level 2}=[sibling distance=18mm]
\tikzstyle{level 3}=[sibling distance=12mm]
\tikzstyle{level 4}=[sibling distance=8mm]
\node [circle,draw]{}
child {child {child {child {[fill] circle (2pt)}child {[fill] circle (2pt)}}child {[fill] circle (2pt)}}child {child {[fill] circle (2pt)}child {[fill] circle (2pt)}}}
child {child {child {child {[fill] circle (2pt)}child {[fill] circle (2pt)}}child {[fill] circle (2pt)}}child {child {[fill] circle (2pt)}child {child {[fill] circle (2pt)}child {[fill] circle (2pt)}}}};
\end{tikzpicture}
\caption{The even binary tree $E^2_{11}$ with $11$ leaves.}\label{E2.11}
\end{figure}

\begin{question}
Is it true that for $n\geq k$, the even binary tree $E^2_n$ has the smallest number of copies of the binary caterpillar $F^2_k$ among all binary trees with $n$ leaves? 

We mention that the case $k\leq 3$ is trivial, while calculations show that the case $k\in \{4,5\}$ is also true for values of $n$ up to $100$.
\end{question}


\begin{thebibliography}{10}

\bibitem{AudaceStephanPaper1}
{\'E}va~Czabarka, Audace~A.~V.~Dossou-Olory, L{\'a}szl{\'o}~ A.~Sz{\'e}kely, and Stephan~Wagner.
\newblock Inducibility of $d$-ary trees.
\newblock Preprint, 2018. \url{http://arxiv.org/abs/1802.03817}

\bibitem{czabarka2016inducibility}
{\'E}va~Czabarka, L{\'a}szl{\'o}~ A.~Sz{\'e}kely, and Stephan~Wagner.
\newblock Inducibility in binary trees and crossings in random tanglegrams.
\newblock {\em SIAM J. Discrete Mathematics}, 31(3):1732--1750, 2017.

\bibitem{DossouOloryWagner}
Audace~A.~V.~Dossou-Olory and Stephan~Wagner.
\newblock Further results on the inducibility of $d$-ary trees.
\newblock In preparation.

\bibitem{fischermann2002wiener}
M.~Fischermann, A.~Hoffmann, D.~Rautenbach, L.~Sz{\'e}kely, and L.~Volkmann.
\newblock Wiener index versus maximum degree in trees.
\newblock {\em J. Discrete Mathematics}, 122(1-3):127--13, 2002.

\bibitem{szekely2005subtrees}
L.~~Sz{\'e}kely and H.~Wang.
\newblock On subtrees of trees.
\newblock {\em Advances in Applied Mathematics}, 34(1):138--155, 2005.

\bibitem{hardy1952inequalities}
Godfrey~H.~Hardy, John~E.~Littlewood, and George~P{\'o}lya.
\newblock {\em Inequalities}.
\newblock Cambridge university press, 1952.


\end{thebibliography}
\end{document}